\documentclass[10pt]{amsart}
\setlength{\oddsidemargin}{.35cm}
\setlength{\evensidemargin}{.35cm} \setlength{\marginparsep}{1mm}
\setlength{\marginparwidth}{.8cm} \setlength{\textwidth}{16.5cm}
\setlength{\topmargin}{-1.3cm}
\setlength{\textheight}{24cm}
\setlength{\headheight}{.1in}

%%%%%%%%%  Packages
\usepackage{amsmath}
\usepackage{amssymb}
\usepackage{amsthm}
\usepackage{bbm}
\usepackage[all]{xy}
\usepackage{dsfont}

%%%%%%%%%%  Theorems and Corollary in the Introduction
\newtheorem{Thm}{Theorem}

%%%%%%%%%%  Theorems, etc.
\newtheorem{theorem}{Theorem}[section]
\newtheorem{lemma}[theorem]{Lemma}

\newtheorem{corollary}[theorem]{Corollary}

\newtheorem{proposition}[theorem]{Proposition}

\newtheorem{example}[theorem]{Example}

%%%%%%%%%%%  Definitions, Remarks.
\theoremstyle{definition}
\newtheorem{remark}[theorem]{Remark}
\newtheorem{definition}[theorem]{Definition}
\newenvironment{proaf}{{\noindent \sc Proof} }{\mbox{ }\hfill$\Box$
                        \vspace{1.5ex}
                        \par}
%%%%%%%%%%
%%%%%%%%%%%%%%%

\newenvironment{equationth}{\stepcounter{theorem}\begin{equation}}{\end{equation}}

\def\Z{\mathbb Z}
\def\C{\mathbb C}

\def\0{\underline 0}

 \begin{document}

\title {\bf  On the Chern classes of singular complete intersections}

\author{Roberto Callejas-Bedregal}
\address{Centro de Ci\^encias Exatas e da Natureza,  Universidade Federal da Para\'{i}ba-UFPb, Jo\~ao Pessoa, PB - Brasil.}
\email{roberto@mat.ufpb.br}

\author{Michelle F. Z. Morgado}
\address{Instituto de Bioci\^encias Letras e  Ci\^encias Exatas.\\Universidade Estadual Paulista-UNESP, Brasil.}
\email{mmorgado@ibilce.unesp.br}

\author{Jos\'e Seade}
\address{Instituto de Matem\'aticas, Universidad Nacional Aut\'onoma de M\'exico.}
\email{jseade@im.unam.mx}

\thanks{Research partially supported by CAPES, CNPq and FAPESP,
Brazil, and by FORDECYT-CONACYT and PAPIIT-UNAM, Mexico.}

\keywords{Complete intersections, Milnor classes,
Whitney stratifications, Schwartz-MacPherson classes, Fulton-Johnson classes.}
 \subjclass{Primary ;
14C17, 55N45, 14M10 Secondary; 14B05, 32S20  }

%\date{\bf }
\maketitle

\begin{abstract}
We   consider two classical  extensions  for singular varieties of the usual Chern classes of complex manifolds, namely  the total Schwartz-MacPherson   and  Fulton-Johnson classes, $c^{SM}(X)$ and   $c^{FJ}(X)$. Their difference (up to sign) is  the total Milnor class ${\mathcal M}(X)$,  a generalization of the Milnor number for varieties with arbitrary singular set.
We get first Verdier-Riemann-Roch type formulae for the total classes $c^{SM}(X)$ and   $c^{FJ}(X)$, and use these to
 prove a surprisingly simple formula for  the total Milnor class when $X$ is defined by a finite number of local complete intersection $X_1,\cdot \ldots \cdot,X_r$ in a complex manifold, satisfying  certain transversality conditions.
As applications  we obtain a Parusi\'{n}ski-Pragacz type formula and an
Aluffi type formula for the Milnor class, and a description of the Milnor classes of $X$  in terms of the global L\^e classes of
the $X_i$.
\end{abstract}

\section*{Introduction}
There are various different notions of Chern classes for singular varieties, each
having its own interest and characteristics. Perhaps the most
important of these are the total Schwartz-MacPherson class
$c^{SM}(X)$ and the total Fulton-Johnson class $c^{FJ}(X)$. In the complex analytic context these are elements in the
homology ring $H_{2*}(X, \Z)$ and in the algebraic context these
are elements in the Chow group $A_*(X)$.
Both of these classes
$c^{SM}(X)$ and $c^{FJ}(X)$ are defined by means of an embedding of $X$ in some complex manifold $M$, but they turn out to be  independent of the choice of embedding; when  $X$  is non-singular these are the Poincar\'e duals of the usual Chern classes. By definition the total Milnor class  of $X$ is:

\begin{equationth}\label{def Milnor class}
{\mathcal M}(X):=(-1)^{{\rm dim} X}\left(c^{FJ}(X)-c^{SM}(X) \right).
\end{equationth}

Milnor classes are a generalization of the classical Milnor number to varieties $X$ with arbitrary singular set. These have
support in the singular set ${\rm Sing}(X)$.
 There is a
Milnor class in each dimension from 0 to that of ${\rm Sing}(X)$.
In particular, when the singularities of $X$ are all  isolated, then there is only a $0$-degree Milnor
class which is an integer,  and if $X$  further is  a local complete intersection,  then this integer is the sum of the local Milnor
numbers of $X$ at its singular points (by \cite{SS, Suwa}).
 Milnor classes  are  important invariants   that encode much
information about the varieties in question, see for instance \cite{Aluffi,  Aluffi2, Alu-Mar,  BSS, BMS,  PP, Par-Pr, Sch5}.
Yet, most of the work on Milnor classes in the literature  is for  hypersurfaces, the
 complete intersection case being much harder ({\it cf.}  \cite {BLSS2, BMS-2, MSS}): that is the setting we envisage here. Our work is somehow inspired by the product formulas for the Milnor class of Ohmoto and Yokura in \cite{O-Y}.
 We prove:

 \begin{Thm}\label{theorem-1}
  Let $M$ be an $n$-dimensional compact complex
analytic manifold and let $\{E_{1},\cdot \ldots \cdot,E_r\}$, $
r\ge 1$, be
holomorphic vector bundles  over $M$ of ranks $d_{i}\ge1$. For each $i =1,\cdot \ldots \cdot r$,  let $X_i$ be the $(n-d_{i})$-dimensional local
complete intersection in $M$  defined by the zeroes of a regular section
 $s_{i}$ of $E_{i}$. Assume further that the   $X_i$ are equipped with
 Whitney stratifications ${\mathcal S}_i$ such that all the intersections amongst strata in the various $X_i$ are transversal. Set $X= X_1 \cap \cdot \ldots \cdot \cap X_r$, a local complete intesection of dimension $n -d_1 -\cdot \ldots \cdot -d_r$. Then:
 \begin{itemize}
\item [(i)] $\; \;  \;c^{SM}(X)=  c\left( \left(
TM|_{X}\right)^{\oplus r-1} \right)^{-1} \, \cap \,\;\Big(c^{SM}(X_{1}) \cdot \ldots \cdot c^{SM}(X_{r})\Big);$
\item [(ii)]  $\; \;c^{FJ}(X)=  c\left( \left(
TM|_{X}\right)^{\oplus r-1} \right)^{-1} \, \cap \,\;\Big(c^{FJ}(X_{1}) \cdot \ldots \cdot c^{FJ}(X_{r})\Big);$ and therefore
\item [(iii)]  ${\mathcal M}(X)=(-1)^{{\rm dim} X} \, c\left( \left(
TM|_{X}\right)^{\oplus r-1} \right)^{-1} \, \cap \,\;\Big(
c^{FJ}(X_{1}) \cdot \ldots \cdot c^{FJ}(X_{r}) - c^{SM}(X_{1}) \cdot \ldots \cdot c^{SM}(X_{r})\Big).$
\end{itemize}
 \end{Thm}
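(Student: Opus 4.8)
The plan is to reduce the theorem to two Verdier–Riemann–Roch type identities for a single transverse intersection — one for $c^{SM}$ and one for $c^{FJ}$ — and then iterate them by induction on $r$. Statement (iii) needs no separate argument: substituting (i) and (ii) into the definition of the total Milnor class and using that $c((TM|_X)^{\oplus r-1})^{-1}\cap(-)$ is additive yields (iii) at once.

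Part (ii) is bookkeeping with virtual tangent bundles. Since each $X_i$ is the zero scheme of a regular section $s_i$ of $E_i$ and the strata of the various $X_i$ meet transversally, the section $(s_1,\dots,s_r)$ of $E_1\oplus\cdots\oplus E_r$ is again regular; hence $X$ is a local complete intersection with $N_XM\cong\bigoplus_i E_i|_X$, virtual tangent bundle $T^{vir}_X=[TM|_X]-\sum_i[E_i|_X]$, and $[X]=[X_1]\cdots[X_r]$ as cycles. By definition $c^{FJ}(X)=c(T^{vir}_X)\cap[X]$, while under the transversality hypothesis the intersection product $c^{FJ}(X_1)\cdots c^{FJ}(X_r)$ equals $\left(\prod_i c(T^{vir}_{X_i}|_X)\right)\cap[X]$ (pull the Chern-class factors out by the projection formula and use $[X_1]\cdots[X_r]=[X]$). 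Since $c(T^{vir}_X)=c(TM|_X)\prod_i c(E_i|_X)^{-1}$ whereas $\prod_i c(T^{vir}_{X_i}|_X)=c(TM|_X)^{r}\prod_i c(E_i|_X)^{-1}$, the ratio of the two expressions is exactly $c(TM|_X)^{1-r}=c((TM|_X)^{\oplus r-1})^{-1}$, which is (ii). Equivalently, (ii) is the $r$-fold iterate of the elementary identity $c^{FJ}(Z(s)\cap Y)=c(TM|_{Z(s)\cap Y})^{-1}\cap\left(c^{FJ}(Z(s))\cdot c^{FJ}(Y)\right)$.

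The content is in (i). I would first establish the two-factor case as a Verdier–Riemann–Roch statement: if $Y\subset M$ carries a Whitney stratification transverse to that of the local complete intersection $X_r=Z(s_r)$, then
\[
c^{SM}(X_r\cap Y)=c(TM|_{X_r\cap Y})^{-1}\cap\left(c^{SM}(X_r)\cdot c^{SM}(Y)\right),
\]
where $\cdot$ is the refined intersection product $A_*(X_r)\otimes A_*(Y)\to A_*(X_r\cap Y)$ in the smooth ambient $M$, the correction factor being the inverse Chern class of the normal bundle $TM$ of the diagonal $\Delta_M\colon M\hookrightarrow M\times M$. This is where transversality does the real work: it forces $X_r\cap Y$ to be the expected scheme, makes the product stratification $\mathcal S_r\times\mathcal S_Y$ on $X_r\times Y$ transverse to $\Delta_M$, so that $\Delta_M^*\mathbf 1_{X_r\times Y}=\mathbf 1_{X_r\cap Y}$ and the MacPherson natural transformation commutes with the Gysin map of $\Delta_M$ up to $c(TM|_{X_r\cap Y})$ — which is precisely Verdier–Riemann–Roch for Chern–Schwartz–MacPherson classes along the transverse (hence normally nonsingular) regular embedding $\Delta_M$, reduced to the known smooth-pullback case by deformation to the normal cone. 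Granting this identity, (i) follows by induction on $r$: write $X=X_1\cap Y$ with $Y=X_2\cap\cdots\cap X_r$; the transversality hypotheses are inherited by $Y$ (an intersection of transverse Whitney stratifications is again a Whitney stratification, still transverse to $\mathcal S_1$), so the two-factor identity gives $c^{SM}(X)=c(TM|_X)^{-1}\cap\left(c^{SM}(X_1)\cdot c^{SM}(Y)\right)$; inserting the inductive formula $c^{SM}(Y)=c((TM|_Y)^{\oplus r-2})^{-1}\cap(c^{SM}(X_2)\cdots c^{SM}(X_r))$ and moving the Chern-class factor past the Gysin product with $X_1$ via the projection formula (using $(TM|_Y)|_X=TM|_X$) collects the two factors into $c((TM|_X)^{\oplus r-1})^{-1}$, giving (i).

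The main obstacle is precisely the two-factor identity for $c^{SM}$: since $c^{SM}(X_r)$ is not a Chern class capped with $[X_r]$ when $X_r$ is singular, one cannot finish by Chern-class bookkeeping as in (ii), and must instead exploit that $c^{SM}$ is a natural transformation on constructible functions together with transversality to place oneself in the normally nonsingular situation where VRR for CSM classes is available. A secondary point requiring care is the precise meaning and the associativity and commutativity of the products $c^{SM}(X_1)\cdots c^{SM}(X_r)$ and $c^{FJ}(X_1)\cdots c^{FJ}(X_r)$ in $A_*(X)$ — in particular the independence of the answer from the order in which the factors $X_i$ are peeled off — which again rests on the transversality hypotheses guaranteeing that every intermediate intersection is proper.
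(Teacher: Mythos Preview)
Your proposal is correct, and it shares the core mechanism with the paper---pull back along the diagonal embedding of $M$ and use a Verdier--Riemann--Roch identity for $c^{SM}$---but the implementations differ in two respects. First, you argue by induction on $r$ via the two-fold diagonal $M\hookrightarrow M\times M$, whereas the paper works directly with the $r$-fold diagonal $\Delta\colon M\hookrightarrow M^{(r)}$: it proves once that $\Delta^{!}c^{SM}(Z(t))=c\bigl((TM|_X)^{\oplus r-1}\bigr)\cap c^{SM}(X)$ and the analogous identity for $c^{FJ}$, then specializes to $Z(t)=X_1\times\cdots\times X_r$ and invokes Kwieci\'nski's product formula $c^{SM}(\prod_i X_i)=\prod_i c^{SM}(X_i)$ (together with its elementary analogue for $c^{FJ}$) to split the left-hand side. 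Second, and more substantively, you invoke the VRR identity for $c^{SM}$ along a transverse regular embedding as a known black box, whereas the paper proves its $r$-fold version from scratch by passing to characteristic cycles: it shows that the projectivized characteristic cycle $\mathbb{P}(Ch(\beta))$ pulls back correctly under the refined Gysin map $\delta^{!}$ (comparing normal Morse indices before and after restriction to $\Delta(M)$), and then converts back to $c^{SM}$ via the Parusi\'nski--Pragacz description of $c^{SM}$ in terms of $\mathbb{P}(Ch)$. Your route is shorter if one is content to cite VRR for CSM classes; the paper's route is more self-contained and makes explicit, at the level of characteristic cycles, exactly where the transversality hypothesis is consumed.
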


 The transversality condition in this Theorem can be relaxed (see section \ref{sec. main lemma}).
 Similar transversality conditions were used in  \cite{Sch5}  to prove a refined intersection formula for the Chern-Schwartz-MacPherson classes.

The proof of Theorem  \ref{theorem-1} takes most of this article.
The first step  is proving  Verdier-Riemann-Roch type  formulae for the  Schwartz-MacPherson,
 the Fulton-Johnson  and, therefore, for the Milnor classes of local complete intersections. In the last section of this article
we give various  applications. The first is  Theorem \ref{main-lemma} that
describes the Milnor class of $X$
  in terms of the Milnor  and the Schwartz-MacPherson classes of the $X_i$ and the Chern classes of $M$ restricted to $X$.
For instance, for $r=2$ we get the beautiful formula:
$${\mathcal M}(X)=c\left( \left( TM|_{X}\right) \right)^{-1}\cap
 \Big((-1)^{n}{\mathcal
M}(X_1)\cdot {\mathcal M}(X_2)+ (-1)^{d_{1}} c^{SM}(X_1)\cdot  {\mathcal
M}(X_2)+(-1)^{d_{2}}{\mathcal M}(X_1)\cdot  c^{SM}(X_2)\Big).$$
For $r= 3$ we get: {\small
$${\mathcal M}(X)= c\left( \left( TM|_{X}\right)^{\oplus 2}
\right)^{-1}\cap \;\; \Big({\mathcal M}(X_1)\cdot {\mathcal M}(X_2)\cdot
{\mathcal M}(X_3)+ (-1)^{(d_{1}+d_{2})} c^{SM}(X_1)\cdot
c^{SM}(X_2)\cdot  {\mathcal M}(X_3) +$$
$$+(-1)^{(d_{1}+d_{3})} c^{SM}(X_1)\cdot {\mathcal
M}(X_2)\cdot  c^{SM}(X_3)+(-1)^{(d_{2}+d_{3})} {\mathcal M}(X_1)\cdot
c^{SM}(X_2)\cdot  c^{SM}(X_3)+$$
$$+(-1)^{(n-d_{1})} c^{SM}(X_1)\cdot {\mathcal M}(X_2)\cdot {\mathcal
M}(X_3)+(-1)^{(n-d_{2})} {\mathcal M}(X_1)\cdot c^{SM}(X_2)\cdot {\mathcal
M}(X_3)+$$
$$+(-1)^{(n-d_{3})} {\mathcal M}(X_1)\cdot {\mathcal M}(X_2)\cdot  c^{SM}(X_3)
\Big),$$} \hskip-3pt and so on.
This highlights why understanding the Milnor classes of complete intersections is {\it a priori} far more difficult than in the hypersurface case, though the   formula in Theorem  \ref{theorem-1} is surprisingly simple.

We then restrict the
 discussion to the case where the bundles in question are all line bundles $L_i$. We get two interesting  applications of Theorem \ref{main-lemma}:

\vspace{0,2cm}  \noindent
{\bf i)}
A Parusi\'nski-Pragacz type formula for local
complete intersections as above (Corollary \ref{P-P}).
This expresses  the
Milnor classes using only  Schwartz-MacPherson classes, and it answers positively the expected description given
by Ohmoto and Yokura in \cite{O-Y} for the  total Milnor class of a local
complete intersection.   We  notice that a different generalization of the Parusi\'nski-Pragacz  formula for complete intersections has been given recently in \cite{MSS}.

\vspace{0,2cm}  \noindent
{\bf ii)}  A   description of the total Milnor class of the local complete intersection $X$  in the vein of  Aluffi's formula in \cite{Aluffi} for hypersurfaces, using  Aluffi's $\mu$-classes (Corollary \ref{Aluffi}).

\vskip.2cm

This work is a refinement of our unpublished article \cite{BMS-1} (cf. also \cite {BMS-2}). We are indebted to the referee and to J\"{o}rg Sch\"{u}rmann for valuable suggestions.
We are also grateful  to Nivaldo Medeiros and Marcelo Saia for fruitful
 conversations.

\section{Chern classes and the diagonal embedding}
\subsection{Derived categories}
 We  assume some basic knowledge  on derived categories as described  for instance in
\cite{Dimca}.
If $X$ is a complex analytic space then ${\mathcal D}^{b}_{c}(X)$
denotes the derived category of bounded constructible complexes of
sheaves of $\C$-vector spaces on $X$. We denote the objects of
${\mathcal D}^{b}_{c}(X)$ by something of the form $F^{\bullet}$. The
shifted complex $F^{\bullet}[l]$ is defined by
$(F^{\bullet}[l])^{k}=F^{l+k}$ and  its differential is
$d^{k}_{[l]}=(-1)^{l}d^{k+l}$. The constant sheaf $\C_{X}$ on $X$
induces an object $\C_{X}^{\bullet} \in {\mathcal D}^{b}_{c}(X)$ by
letting $\C_{X}^{0}=\C_{X}$ and $\C_{X}^{k}=0$ for $k\neq 0$.
If $h:X\rightarrow \C$ is an analytic map and $F^{\bullet}\in
{\mathcal D}^{b}_{c}(X)$, then we denote the sheaf of vanishing cycles
of $F^{\bullet}$  with respect to $h$ by $\phi_{h}F^{\bullet}$.
For $F^{\bullet}\in {\mathcal D}^{b}_{c}(X)$ and $p \in X$, we denote
by ${\mathcal H}^{*}(F^{\bullet})_{p}$ the stalk cohomology of
$F^{\bullet}$ at $p$, and by $\chi(F^{\bullet})_{p}$ its Euler
characteristic. That is,
$$\chi(F^{\bullet})_{p}=\sum_{k}(-1)^{k}{\rm dim}_{\C}{\mathcal
H}^{k}(F^{\bullet})_{p}.$$
We also denote by $\chi(X,F^{\bullet})$ the Euler characteristic of
$X$ with coefficients in $F^{\bullet}$, {\it i.e.},
$$\chi(X,F^{\bullet})=\sum_{k}(-1)^{k}{\rm dim}_{\C}\;\mathbb{H}^{k}(X,F^{\bullet}),$$
where $\mathbb{H}^{*}(X,F^{\bullet})$ denotes the hypercohomology
groups of $X$ with coefficients in $F^{\bullet}$.
When $F^{\bullet}\in {\mathcal D}^{b}_{c}(X)$ is ${\mathcal
S}$-constructible, where ${\mathcal S}$ is a Whitney stratification of
$X$, we  denote it by $F^{\bullet}\in {\mathcal D}^{b}_{{\mathcal S}}(X)$. Setting
$\chi(F^{\bullet}_{S})=\chi(F^{\bullet})_{p}$ for an arbitrary point
$p \in S$,
we have
\cite[Theorem 4.1.22]{Dimca}:
\begin{equation}\label{EulerCharact}\chi(X,F^{\bullet})=\sum_{S\in {\mathcal
S}}\chi(F^{\bullet}_{S})\chi(S)\,.\end{equation}

For a subvariety $X$ in a complex manifold  $M$ we
denote its conormal variety by
$T^{*}_{X}M$. That is, $$T^{*}_{X}M:={\rm closure}\;\{ (x, \theta) \in T^{*}M\;|\; x
\in X_{{\rm reg}}\;{\rm and}\; \theta_{|_{T_{x} X_{{\rm
reg}}}}\equiv 0\}\,,$$ where $T^{*}M$ is the cotangent bundle  and
$X_{{\rm reg}}$ is the regular part.
One has (see
\cite{GM}):

\begin{definition}
Let $ X $ be an analytic subvariety of a complex manifold $ M $, $\{
S_{\alpha} \}$ a Whitney stratification of $M$ adapted to $ X $ and
$x\in S_\alpha$ a point in $X$. Consider $g:(M,x)\rightarrow (\C,0)$
a germ of holomorphic function such that $d_{x}g $ is a {\it
non-degenerate covector} at $x$ with respect to the fixed
stratification. That is, $d_{x}g \in T^{*}_{S_\alpha}M$ and $d_{x}g
\not\in T^{*}_{S^{'}}M$  for all stratum $S^{'} \neq S_\alpha$.
Let $N$ be  a germ of a closed complex submanifold of $M$ which is
transversal to $S_\alpha$, with $N \cap S_\alpha=\{ x\}$. Define the
{\it complex link }  $l_{S_\alpha}$  of $S_\alpha$ by:
 \begin{center}$l_{S_\alpha}:= X\cap N \cap
B_{\delta}(x)\cap \{g=w\}\quad{\rm for}\;0<|w|<\!\!< \delta<\!\!<
1.$\end{center}
The {\it normal Morse datum} and  the {\it normal Morse index} of the stratum  $S_\alpha$
are, respectively:
$$NMD(S_\alpha):=(X\cap N \cap B_{\delta}(x),l_{S_\alpha}) \quad \hbox{and} \quad
\eta(S_\alpha,F^{\bullet}):=\chi(NMD(S),F^{\bullet})\,,$$
where the right-hand-side means the Euler characteristic of the
relative hypercohomology. \end{definition}

In fact, the slice $N$ normal to the stratum $S_\alpha$ at $x$ is transversal to all other stratum that contain $x$ in their closure, by Whitney regularity. Therefore the Whitney stratification on $X$ induces a Whitney stratification on $NMD(S_\alpha)$. Hence the sheaf $F^{\bullet}$ restricted to $NMD(S_\alpha)$ is constructible and therefore the relative hypercohomology is well-defined.

By  \cite[Theorem 2.3]{GM}  we
get that  $\eta(S_\alpha,F^{\bullet})$ does not depend on
the choices of $x\in S_\alpha,\; g$ and $N$. By \cite[p. 283]{Sch4}, the  normal Morse index $\eta(S_\alpha,F^{\bullet})$  can be computed in terms of sheaves of vanishing cycles as
\begin{equationth}\label{Dimca}
\eta(S_\alpha,F^{\bullet})=-\chi(\phi_{g|_{N}}(F^{\bullet}|_{N})).
\end{equationth}
By \cite[Remark 2.4.5(ii)]{Dimca} this can also be expressed as:
\begin{equation}\label{RelativeEuler}\eta(S_\alpha,F^{\bullet})=\chi(X\cap N \cap
B_{\delta}(x),F^{\bullet})-\chi(l_{S_\alpha},F^{\bullet})\,.\end{equation}

\subsection{Chern classes for singular varieties}
 From now on, let $M$ be an $n$-dimensional compact complex
analytic manifold and let $E$ be a holomorphic vector bundle over
$M$ of rank $d$. Let $X$ be the zero scheme of a regular holomorphic
section of $E$, which is an $(n-d)$-dimensional local complete
intersection. Consider {\it {the virtual bundle}} $\tau(X;M):= T
M|_{_{X}}-E|_{_{X}}$, where $T M$ denotes the tangent bundle of $
M$ and the difference is in the  K-theory of $X$. The element $\tau(X;M)$ actually is independent of $M$ (see \cite[Appendix B.7.6.]{Ful}) and is called {\it {the virtual tangent bundle of}} $X$. {\it The Fulton-Johnson
homology class of} $X$ is defined by the Chern class of  $\tau(X;M)$
via the Poincar\'e morphism, that is (cf. \cite{Ful}):
$$c^{FJ}(X;M)=c(\tau(X;M))\cap
[X]:=c^{}(T M|_X) c^{}(E|_X)^{-1}\cap [X].$$
 For simplicity we denote the virtual bundle and the Fulton-Johnson classes simply by $\tau(X)$ and $c^{FJ}(X)$.

Consider now the Nash blow up $\tilde{X} \stackrel{\nu}{\rightarrow}
 X$ of  $X$, its Nash bundle ${\tilde T} \stackrel{\pi}{\rightarrow}
\tilde{ X} $ and the Chern classes of $\tilde{T}$, $c^{j}(\tilde{T})
\in H^{2j}(\tilde{ X})$, $j=1,\ldots,n$. {\it The Mather classes} of $X$ are:
$$c^{Ma}_{k}( X):=v_{*}(c^{n-d-k}(\tilde{T})\cap [\tilde{ X}])\in H_{2k}( X),\;\;k=0,\ldots,n \,.$$
We equip $X$ with a Whitney stratification $X_{\alpha}$.
The MacPherson classes are obtained from the Mather classes by considering
appropriate ``weights" for each stratum, determined by the local
Euler obstruction ${\rm Eu}_{{ X_{\alpha}}}(x)$.  This is an integer associated   in \cite{MacP}
to each point $x \in X_{\alpha}$.  It is proved in \cite{MacP} that
there exists a unique set of integers $b_{\alpha}$,
 for which the
equation $\sum b_{\alpha} {\rm Eu}_{\bar{ X}_{\alpha}}(x)=1$ is
satisfied for all points $x \in  X$. Here, $\bar{ X}_{\alpha}$ denotes the closure of the stratum, which is itself analytic; the sum runs over all
strata $ X_{\alpha}$ containing $x$ in their closure.
Then {\it the  MacPherson class of degree} $k
$ is defined by
$$c^{M}_{k}( X):=\sum
b_{\alpha}\;i_{*}(c^{Ma}_{k}(\bar{ X}_{\alpha})),$$
where
$i:\bar{ X}_{\alpha}\hookrightarrow  X$ is the inclusion map.
We remark that by \cite{BS}, the MacPherson classes coincide,
up to Alexander duality, with the classes defined by
M.-H. Schwartz in \cite{Sch}. Thus, following the modern
literature (see for instance \cite{Par-Pr, BLSS2, BSS}), these  are called
Schwartz-MacPherson classes of $X$ and  denoted
 by $c^{SM}_{k}(X)$.

\begin{definition}
The total Milnor class of $X$ is (see \cite{BSS,Par-Pr}):
$${\mathcal M}(X):=(-1)^{n-d}\left(c^{FJ}(X)-c^{SM}(X) \right).$$
\end{definition}

\subsection{Milnor classes and the diagonal embedding}
 Given a manifold  $M$  as before,  set $M^{(r)}:=M\times \ldots \times M$, $r$ times. We let $E$ be a holomorphic vector
bundle over $M^{(r)}$ of rank $d$. Consider $\Delta: M\rightarrow
M^{(r)}$ the diagonal morphism, which is a regular embedding of
codimension $nr-n$. Let $t$ be a regular holomorphic section of $E$.
The set of the zeros of $t$ is a closed subvariety $Z(t)$ of
$M^{(r)}$ of dimension $nr-d$. Consider $Z(\Delta^{*}(t))$ the set
of the zeros of the pull back section of $t$ by
$\Delta$.

 Following \cite[Chapter 6]{Ful} we have that
$\Delta$ induces the refined Gysin homomorphism
$$\Delta^{!}:H_{2k}(Z(t))\rightarrow H_{2(k-nr+n)}(Z(\Delta^{*}(t))).$$
The refined intersection product is defined by
${\alpha}_1  \cdot \ldots \cdot {\alpha}_r:=\Delta^{!}({\alpha}_1\times\ldots \times
{\alpha}_r)$.
For the usual homology this is defined by duality between
homology and cohomology:
$$\Delta^{!}\! = \Delta^{*}\! : H_{2k}(Z(t);\Z) \simeq H^{2(nr-k)} ({Z(t)};\Z)\rightarrow H^{2(nr-k)}({Z(\Delta^{*}(t))};\Z)  \simeq H_{2(k-nr+n)}(Z(\Delta^{*}(t));\Z).$$

\begin{remark}\label{Fu}
\begin{enumerate}

\item In \cite[Proposition 14.1, (c) and (d)(ii)]{Ful} it is proved that if $f:X'\rightarrow X$ is a local complete intersection morphism between purely dimensional schemes, $E$ is a vector bundle on $X,$  $s$ is a regular section of $E$ and $s'=f^*s$ is the induced section on $f^*E,$ then $f^![Z(s)]=[Z(s')],$ where $f^!$ is the refined Gysin homomorphism induced by $f.$

\vspace{0.3cm}\item In \cite[Proposition 6.3]{Ful} it is proved that if $\iota:X\rightarrow Y$ is a regular embedding and $F$ is a vector bundle on $Y,$ then $\iota^!(c_m(F)\cap \alpha)=c_m(\iota^*F)\cap \iota^! \alpha,$ for all $\alpha\in H_{2k}(Y,\Z).$  Applying this result to the diagonal morphism $\Delta: M\rightarrow M^{(r)},$ which is a regular embedding, we have that for any vector bundle $F$ on $M^{(r)}$ holds that $\Delta^!(c_m(F) \cap \alpha)=c_m(\Delta^*F) \cap\Delta^!(\alpha)$ for all $\alpha \in H_{2k}(M^{(r)};\Z)$
and $m\geq 0.$

\end{enumerate}
\end{remark}

These two remarks are used for following  a Verdier-Riemann-Roch type theorem for the Fulton-Johnson classes:

\begin{proposition}\label{t:1} The refined Gysin morphism satisfies: $$\Delta^{!}\left( \;c^{FJ}(Z(t))\;\right)=
c\left( \left( TM|_{Z(\Delta^{*}t)}\right)^{\oplus r-1} \right)\cap c^{FJ}(Z(\Delta^{*}t))\,.$$\end{proposition}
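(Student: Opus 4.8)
The plan is to compute $\Delta^!$ applied to the Fulton--Johnson class of $Z(t)$ directly from its definition, using the two parts of Remark \ref{Fu}. Recall that by definition
\[
c^{FJ}(Z(t)) \;=\; c\big(T M^{(r)}|_{Z(t)}\big)\, c\big(E|_{Z(t)}\big)^{-1} \cap [Z(t)].
\]
First I would apply Remark \ref{Fu}(2) to the regular embedding $\Delta$: since $c\big(T M^{(r)}\big) c(E)^{-1}$ is a (formal series in) Chern classes of vector bundles on $M^{(r)}$, the refined Gysin map commutes with capping against it, giving
\[
\Delta^!\big(c^{FJ}(Z(t))\big) \;=\; c\big(\Delta^* T M^{(r)}|_{Z(\Delta^* t)}\big)\, c\big(\Delta^* E|_{Z(\Delta^* t)}\big)^{-1} \cap \Delta^![Z(t)].
\]

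Next I would handle the two remaining pieces. For the fundamental class, Remark \ref{Fu}(1) applies: $\Delta$ is a local complete intersection morphism, $s' = \Delta^* t$ is the induced section of $\Delta^* E$, and hence $\Delta^![Z(t)] = [Z(\Delta^* t)]$. For the Chern classes, the key observation is the bundle identity $\Delta^* T M^{(r)} \cong (TM)^{\oplus r}$ on $M$ (the diagonal pulls back the external product of tangent bundles to the direct sum of $r$ copies), while $\Delta^* E$ is exactly the bundle whose section $\Delta^* t$ cuts out $Z(\Delta^* t)$, so $c(\Delta^* E) = c(E|_{Z(\Delta^* t)})$ matches the $E$-factor in the definition of $c^{FJ}(Z(\Delta^* t))$. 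Restricting everything to $Z(\Delta^* t) \subset M$, we get
\[
\Delta^!\big(c^{FJ}(Z(t))\big) \;=\; c\big((TM)^{\oplus r}|_{Z(\Delta^* t)}\big)\, c\big(\Delta^* E|_{Z(\Delta^* t)}\big)^{-1} \cap [Z(\Delta^* t)].
\]

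Finally I would split off one copy of $TM$ from the $r$-fold direct sum: $c\big((TM)^{\oplus r}\big) = c(TM) \cdot c\big((TM)^{\oplus r-1}\big)$, and recognize that the factor $c(TM|_{Z(\Delta^* t)})\, c(\Delta^* E|_{Z(\Delta^* t)})^{-1} \cap [Z(\Delta^* t)]$ is precisely $c^{FJ}(Z(\Delta^* t))$ by definition (here using that $Z(\Delta^* t)$ is a local complete intersection in $M$ cut out by a regular section of $\Delta^* E$, which follows from the standing hypotheses). Pulling the remaining factor $c\big((TM)^{\oplus r-1}\big)$ out of the cap product yields exactly the claimed formula. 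The main point requiring care is the bookkeeping of which bundle lives on which space and the verification that $\Delta^* t$ is again a regular section (so that $Z(\Delta^* t)$ is genuinely a local complete intersection and its Fulton--Johnson class is defined the expected way); this is where Remark \ref{Fu}(1) does the essential work. The rest is formal manipulation of Chern classes and cap products.
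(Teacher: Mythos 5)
Your proposal is correct and follows essentially the same route as the paper's proof: apply Remark \ref{Fu}(2) to commute $\Delta^!$ with the cap product against $c(TM^{(r)})c(E)^{-1}$, use Remark \ref{Fu}(1) to get $\Delta^![Z(t)]=[Z(\Delta^*t)]$, identify $\Delta^*TM^{(r)}$ with $(TM)^{\oplus r}$, and split off one factor of $c(TM)$ together with $c(\Delta^*E)^{-1}\cap[Z(\Delta^*t)]$ to recover $c^{FJ}(Z(\Delta^*t))$. No substantive differences.
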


\begin{proof}
By definition of the Fulton-Johnson class we have $$\Delta^{!}\;c^{FJ}(Z(t))=\Delta^{!}\left(
c\left( TM^{(r)}|_{Z(t)}\right) c\left(
{E}|_{Z(t)}\right)^{-1} \cap [Z(t)] \right).$$

Applying Remark \ref{Fu} (2) to the diagonal morphism $\Delta: M\rightarrow M^{(r)},$ which is a regular embedding,  and using the virtual bundle we have that
$$\Delta^{!}\;c^{FJ}(Z(t))=c\left(\Delta^{*}\left(
TM^{(r)}|_{Z(t)}\right)\right) c\left(
\Delta^{*}\left({E}|_{Z(t)}\right)\right)^{-1} \cap
\Delta^{!}\;[Z(t)].$$ Note that
$\Delta^{*}\left({E}|_{Z(t)}\right)=\Delta^{*}{E}|_{Z(\Delta^{*}t)}$
and, applying Remark \ref{Fu} (1) to the diagonal morphism $\Delta: M\rightarrow M^{(r)}$, which is a local complete intersection morphism, and to the regular section $t$ of $E$ we obtain that $\Delta^![Z(t)]=[Z(\Delta^*(t))].$

 Moreover, since $\Delta^{*} TM^{(r)}= TM\oplus
\ldots \oplus TM$, we have
$$c\left(\Delta^{*}\left(
TM^{(r)}|_{Z(t)}\right)\right)=c\left(\left(
TM|_{Z(\Delta^{*}t)}\right)^{\oplus\;r}\right) \,$$ and the result
follows.
\end{proof}

Let $F(M)$ be the free abelian group of constructible functions on
$M$ with respect to a Whitney stratification $\{ S_{\alpha} \}$. It is proved in \cite{MacP}  that every element $\xi$ in $F(M)$ can be written uniquely in the form:
$$\xi = \sum n_W {\rm Eu}_W \,,$$
for some appropriate subvarieties $W$ and integers $n_W$.
Let  $L( M)$  be the free abelian group of all cycles generated by
the conormal spaces $T^{*}_{W} M$, where $W$ varies over all
subvarieties of $ M$. Given $\xi \in  F(M)$
 define
an element $Ch(\xi)$ in $L(M)$ by:
\begin{equationth}\label{ch}
Ch(\xi):=\sum_{\alpha} (-1)^{\dim W} n_W\cdot  T^{*}_{W} M \,.
\end{equationth}
This
induces an isomorphism $Ch : F(M) \rightarrow L(M)$.
Define the map  $cn:Z( M)\rightarrow L( M)$ by $cn(X):=T^{*}_{X}
M$. Clearly, this is also an isomorphism.
We know from \cite[Section 3]{BMS} that we have a commutative
diagram:

\begin{equation}\label{diagrama}
\xymatrix {  &Z(M) \ar[r]^{\check{E}u} \ar[d]^{cn} & F(M)\ar[d]^{\mbox{id}}\\
&  L(M)  \ar[r]^{Ch}  & F(M) }\end{equation}
The commutativity  of this diagram amounts to
saying:
$$\beta=\sum_{\alpha}\eta(S_{\alpha},\beta) \cdot
{E}u_{S_{\alpha}},$$ for any function $\beta:X\rightarrow
\Z$ which is constructible for the given Whitney stratification, where
$\eta(S_\alpha,\xi)=\eta(S_\alpha,F^{\bullet})$, with $F^{\bullet}$ being the
complex of sheaves such that $\chi( F^{\bullet})_{p}=\xi(p)$.
Substituting in equation (\ref{ch}) we get:
\begin{equationth}\label{nueva}
Ch(\xi):=\sum_{\alpha} (-1)^{\dim S_\alpha}\eta(S_\alpha,\xi)\cdot  T^{*}_{\overline{S}_\alpha} M \,.
 \end{equationth}

\vspace{0,2cm}
Now consider the projectivized cotangent bundles $\mathbb{P}(T^{*}M)$ and  $\mathbb{P}(T^{*}(M^{(r)}))$; we denote by $\mathbb{P}((T^{*}M)^{\oplus r})\,$
the bundle $\mathbb{P}(T^{*}M\oplus \ldots \oplus T^{*}M)$.
Notice that one has a fibre square diagram (see \cite[pag. 428]{Ful}):

\begin{equation}\label{fiber square}
\xymatrix {    \mathbb{P}((T^{*}M)^{\oplus r})
\ar[r]^{\delta} \ar[d]_{p} & \mathbb{P}(T^{*}(M^{(r)}))  \ar[d]^{\pi^{(r)}} \\ M \ar[r]^{\Delta}  & M^{(r)}  }
\end{equation}
where $\pi^{(r)}$ is the natural proper map. Let $i: \mathbb{P}(T^{*}M) \to \mathbb{P}((T^{*}M)^{\oplus r}) $ be the morphism
 induced by the diagonal embedding
 $T^{*}M \to T^{*}M \oplus \ldots \oplus T^{*}M$.

\begin{proposition} \label{transversal} Let $\beta$ be a constructible function on $M^{(r)}$ with respect to a Whitney stratification $\{{\mathcal T}_\gamma\}$, which we assume transversal to
$\Delta(M)$. Then:
$$\delta^{!}\;[\mathbb{P}(Ch(\beta))]=(-1)^{nr-n}\; i_{*}\;[\mathbb{P}(Ch(\Delta^{*}(\beta)))].$$
\end{proposition}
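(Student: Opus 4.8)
The plan is to reduce everything, by linearity of $Ch$, $\delta^{!}$, $i_{*}$ and $\Delta^{*}$, to the case $\beta=\mathrm{Eu}_{W}$ with $W\subseteq M^{(r)}$ an irreducible closed subvariety that is a union of strata of $\{\mathcal T_\gamma\}$; then $W$, with its induced Whitney stratification, is transversal to $\Delta(M)$, and we may assume $\Delta(M)\cap W_{\mathrm{reg}}\neq\varnothing$ (otherwise both sides vanish or the same argument applies to lower strata). Put $V:=\Delta^{-1}(W)=\Delta(M)\cap W$. Transversality forces $V$ to be pure of dimension $\dim W-(nr-n)$, so $\operatorname{codim}_{M}V=\operatorname{codim}_{M^{(r)}}W$; and, since transversal slicing is compatible with the Nash modification, it preserves the local Euler obstruction, whence $\Delta^{*}\mathrm{Eu}_{W}=\mathrm{Eu}_{V}$. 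By $(\ref{ch})$ we then have $\mathbb P(Ch(\beta))=(-1)^{\dim W}[\mathbb P(T^{*}_{W}M^{(r)})]$ and $\mathbb P(Ch(\Delta^{*}\beta))=(-1)^{\dim V}[\mathbb P(T^{*}_{V}M)]$, and since $(-1)^{\dim W}=(-1)^{nr-n}(-1)^{\dim V}$ the factor $(-1)^{nr-n}$ in the statement is precisely what makes the claim equivalent to the sign-free identity
\[
\delta^{!}\,[\mathbb P(T^{*}_{W}M^{(r)})]=i_{*}\,[\mathbb P(T^{*}_{V}M)]\qquad\text{in }H_{2(n-1)}\bigl(\mathbb P((T^{*}M)^{\oplus r})\bigr).
\]

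Next I would compute the Gysin pullback geometrically. Let $Z:=\delta^{-1}(\mathbb P(T^{*}_{W}M^{(r)}))=\mathbb P(T^{*}_{W}M^{(r)})\cap\mathbb P((T^{*}M)^{\oplus r})$, the part of the conormal variety of $W$ lying over $\Delta(M)$. Transversality of $\Delta(M)$ to the strata of $W$ is exactly the statement that $\Delta$ is non-characteristic for $T^{*}_{W}M^{(r)}$, i.e. $T^{*}_{W}M^{(r)}\cap T^{*}_{\Delta(M)}M^{(r)}$ lies in the zero section; equivalently $Z\cap\mathbb P(N^{*})=\varnothing$, where $N^{*}=\ker\bigl((T^{*}M)^{\oplus r}\xrightarrow{\,\Sigma\,}T^{*}M\bigr)$ is the conormal bundle of $\Delta(M)$ inside $\Delta^{*}T^{*}M^{(r)}=(T^{*}M)^{\oplus r}$ and $\Sigma$ is the fibrewise sum (the transpose of $d\Delta$). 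From this, $\Sigma$ is injective on every fibre of $T^{*}_{W}M^{(r)}$ over $\Delta(M)$, so the intersection defining $Z$ is proper of the expected dimension $(nr-1)-(nr-n)=n-1$ and is generically reduced; hence $\delta^{!}[\mathbb P(T^{*}_{W}M^{(r)})]=[Z]$. Since $Z$ sits in the open set $U:=\mathbb P((T^{*}M)^{\oplus r})\setminus\mathbb P(N^{*})$, the sum $\Sigma$ induces a morphism $\varpi\colon U\to\mathbb P(T^{*}M)$ with $\varpi\circ i=\mathrm{id}$, $p|_{U}=q\circ\varpi$ (where $q\colon\mathbb P(T^{*}M)\to M$), and $\mathcal O(1)|_{U}=\varpi^{*}\mathcal O_{\mathbb P(T^{*}M)}(1)$ (the composite $\mathcal O(-1)\hookrightarrow p^{*}\bigl((T^{*}M)^{\oplus r}\bigr)\xrightarrow{\Sigma}p^{*}T^{*}M$ is nowhere zero on $U$). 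Moreover $\varpi|_{Z}\colon Z\to\mathbb P(T^{*}_{V}M)$ is finite, surjective and generically of degree one — over $x\in V_{\mathrm{reg}}\cap W_{\mathrm{reg}}$ there is a single conormal covector of $W$ restricting to a prescribed conormal covector of $V$, since transversality makes $\Sigma$ carry $N^{*}_{x}W$ isomorphically onto $N^{*}_{x}V$ — so $\varpi_{*}[Z]=[\mathbb P(T^{*}_{V}M)]$.

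Finally I would conclude with the projective bundle theorem for $p\colon\mathbb P((T^{*}M)^{\oplus r})\to M$: with $\zeta=c_{1}(\mathcal O(1))$, a homology class is determined by all the classes $p_{*}(\zeta^{j}\cap(-))$. For the right-hand side, $p\circ i=q$ and $i^{*}\zeta=:h=c_{1}(\mathcal O_{\mathbb P(T^{*}M)}(1))$, so by the projection formula $p_{*}\bigl(\zeta^{j}\cap i_{*}[\mathbb P(T^{*}_{V}M)]\bigr)=q_{*}\bigl(h^{j}\cap[\mathbb P(T^{*}_{V}M)]\bigr)$. For the left-hand side, $\zeta|_{U}=\varpi^{*}h$ and $p|_{U}=q\varpi$ with $\varpi|_{Z}$ proper, so $p_{*}\bigl(\zeta^{j}\cap[Z]\bigr)=q_{*}\varpi_{*}\bigl((\varpi^{*}h)^{j}\cap[Z]\bigr)=q_{*}\bigl(h^{j}\cap\varpi_{*}[Z]\bigr)=q_{*}\bigl(h^{j}\cap[\mathbb P(T^{*}_{V}M)]\bigr)$. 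These agree for every $j$, hence $[Z]=i_{*}[\mathbb P(T^{*}_{V}M)]$, which is the reduced identity and so the Proposition; alternatively, this last point is an instance of the compatibility of characteristic cycles with non-characteristic pullback (cf. \cite{Sch5}). The main obstacle — and the reason the argument must be run in homology — is that the support of the Gysin intersection $Z$ is genuinely larger than, and not contained in, the small diagonal $i(\mathbb P(T^{*}M))$, so no cycle-by-cycle identification is possible; one really needs the projective-bundle comparison together with the non-characteristic condition forcing $Z\cap\mathbb P(N^{*})=\varnothing$. The remaining technical points, each a careful but routine consequence of transversality, are the properness and generic reducedness of the intersection defining $Z$, the equality $\varpi_{*}[Z]=[\mathbb P(T^{*}_{V}M)]$, and the invariance of the local Euler obstruction under the transversal slice $\Delta(M)$, together with the sign bookkeeping that produces the factor $(-1)^{nr-n}$.
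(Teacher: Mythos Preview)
Your argument is correct and arrives at the same sign-free conormal identity as the paper, but by a genuinely different route in both halves. For the coefficients and the sign, the paper expands $Ch(\beta)$ through the normal Morse index formula (\ref{nueva}) and compares the weights $m_\gamma,n_\gamma$ stratum by stratum via the isomorphism of normal slices $T\cong\Delta(T)=N$; you instead work in the Euler-obstruction basis (\ref{ch}) and invoke the invariance $\Delta^{*}\mathrm{Eu}_{W}=\mathrm{Eu}_{V}$ under transversal slicing --- an equivalent bookkeeping that imports a standard fact rather than proving it in situ, but yields the factor $(-1)^{nr-n}$ just as cleanly. For the key identity $\delta^{!}[\mathbb{P}(T^{*}_{W}M^{(r)})]=i_{*}[\mathbb{P}(T^{*}_{V}M)]$ (the paper's Lemma \ref{7}), the paper argues by a short tower of fibre squares, in effect asserting that the pulled-back projectivised conormal already coincides with $i\bigl(\mathbb{P}(T^{*}_{V}M)\bigr)$. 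Your observation that the support $Z=\delta^{-1}\bigl(\mathbb{P}(T^{*}_{W}M^{(r)})\bigr)$ need not lie in the small diagonal as a cycle is well taken; your route through the non-characteristic condition $Z\cap\mathbb{P}(N^{*})=\varnothing$, the fibrewise-sum projection $\varpi$, the birationality $\varpi_{*}[Z]=[\mathbb{P}(T^{*}_{V}M)]$, and the projective-bundle decomposition of $H_{*}\bigl(\mathbb{P}((T^{*}M)^{\oplus r})\bigr)$ supplies an explicit homological argument for the equality of classes that the paper's quick step leaves implicit. The trade-off is length versus transparency: the paper's proof is a few lines, while yours is longer but makes the passage from cycle-level intersection to the class-level identity precise.
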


\begin{proof} Since the stratification $\{{\mathcal T}_\gamma\}$  is transversal to $\Delta(M)$, we have that
$\{\Delta^{-1}({\mathcal T}_\gamma)\}$ is a Whitney stratification of
$M$ with respect to which $\Delta^{*}(\beta)$ is a constructible function. Moreover, if $T$ is a normal slice of $\Delta^{-1}({\mathcal T}_\gamma)$ at $x$ then $\Delta(T)$ is a normal slice of
${\mathcal T}_\gamma$ at $(x,\ldots,x).$ Set $N=\Delta(T).$

By equations (\ref{Dimca}) and (\ref{nueva}) we have  $\,\mathbb{P}(Ch(\beta))=\sum
m_{\gamma}\mathbb{P}\left(\overline{T_{{\mathcal T}_{\gamma}}^{*}M^{(r)}}\right)\,,$
 where
$m_{\gamma}:= (-1)^{nr-d-1}\chi\left(\phi_{f| N}F^{\bullet}|_{N}
\right)_{(x,\ldots,x)}$, and $F^{\bullet}$ is the bounded complex of sheaves such that $\chi( F^{\bullet})_{p}=\beta(p)$ and $f:
(M^{(r)},(x,\ldots,x))\rightarrow (\C,0)$ is a germ such that $d_{(x,\ldots,x)}f$ is a non-degenerate covector at $(x,\ldots,x)$ with respect to $\{{\mathcal T}_{\gamma}\}$.

Analogously,
$$\mathbb{P}(Ch(\Delta^{*}(\beta))=\sum_{\gamma}
n_{\gamma}\mathbb{P}\left(\overline{T_{\Delta^{-1}({\mathcal T}_{\gamma})}^{*}M}\right),$$
where $n_{\gamma}:=
(-1)^{n-d-1}\chi\left(\phi_{g|T}G^{\bullet}|_{T}
\right)_{x}$, where
$G^{\bullet}=\Delta^{*}F^{\bullet},$ which is the
bounded complex of sheaves such that $\chi(
G^{\bullet})_{q}=\Delta^{*}(\beta)(q),$ and $g:
(M,x) \rightarrow (\C,0)$  is a germ such that $d_xg$ is a non-degenerate covector at $x$ with respect to $\{\Delta^{-1}({\mathcal T}_{\gamma})\}$. Notice that we can take $g=\Delta^{*}f$ since these definitions do not depend on the choices of $g.$

Notice that $\Delta|_T:T\rightarrow N$ is an isomorphism. Hence $$\phi_{\Delta^{*}(f| N)}\Delta^{*}(F^{\bullet}|_{ N})\simeq \Delta^{*}\left(\phi_{f|N}\left(F^{\bullet}|_{ N}\right)\right)\,.$$ \noindent But clearly
$\phi_{\Delta^{*}(f| N)}\Delta^{*}(F^{\bullet}|_{ N})=\phi_{g| T}G^{\bullet}|_{T},$ thus $$\chi\left(\phi_{g| T}G^{\bullet}|_{T}
\right)_{x}=\chi \left(\Delta^{*}\left(\phi_{f|N}\left(F^{\bullet}|_{ N}\right)\right)\right)_{x}=\chi\left(\phi_{f|N}F^{\bullet}|_{N}
\right)_{(x,\ldots,x)}\,.$$ Therefore
\begin{equation}\label{6}m_{\gamma}=(-1)^{nr-n} n_{\gamma}. \end{equation}

 Proposition \ref{transversal} is now an immediate consequence of the next  lemma:
 \end{proof}
\begin{lemma} \label{7} One has:
$$\delta^{!}\;\left[\mathbb{P}\left(\overline{T_{{\mathcal T}_{\gamma}}^{*}M^{(r)}}\right)\right]=
i_{*}\;\left[\mathbb{P}\left(\overline{T_{\Delta^{-1}({\mathcal T}_{\gamma})}^{*}M}\right)\right].$$
\end{lemma}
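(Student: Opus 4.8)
The plan is to verify the identity by exhibiting the projectivized conormal variety $\mathbb{P}\bigl(\overline{T^*_{\Delta^{-1}({\mathcal T}_\gamma)}M}\bigr)$ as the proper transform, under the fibre-square diagram \eqref{fiber square}, of $\mathbb{P}\bigl(\overline{T^*_{{\mathcal T}_\gamma}M^{(r)}}\bigr)$, and then reading off the refined Gysin pullback from that description. Write $S:=\Delta^{-1}({\mathcal T}_\gamma)$ and $T:={\mathcal T}_\gamma$; by the transversality hypothesis, $\Delta$ meets $T$ transversally, so $\Delta(S)=\Delta(M)\cap T$ and $S$ is a stratum of the induced Whitney stratification of $M$. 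First I would compute the conormal variety of $T$ along $\Delta(M)$: at a smooth point $(x,\dots,x)$ of $\Delta(S)$, a covector in $(T^{*}M^{(r)})_{(x,\dots,x)}$ is conormal to $T$ precisely when it annihilates $T_{(x,\dots,x)}T$, and its restriction to $\Delta(M)\cong M$ (i.e.\ its image under the surjection dual to $\Delta_*$) annihilates $T_x S$ by transversality. Concretely, under the identification $\Delta^*T^*M^{(r)}\cong (T^*M)^{\oplus r}$, the map $\delta$ of \eqref{fiber square} sends a tuple $(\theta_1,\dots,\theta_r)$ to the covector it represents on $M^{(r)}$ restricted over $\Delta(M)$, and the diagonal $i$ embeds $T^*M$ as $\{(\theta,\dots,\theta)\}$.

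The key geometric claim is then: $\delta^{-1}\bigl(\mathbb{P}(\overline{T^*_{T}M^{(r)}})\bigr)$, as a cycle, equals $i_*\mathbb{P}(\overline{T^*_{S}M})$ together with possibly some components lying entirely over lower strata, and that the refined Gysin class $\delta^![\mathbb{P}(\overline{T^*_{T}M^{(r)}})]$ picks out exactly the first term with multiplicity one. I would argue this by working over the open dense stratum: over $S_{\mathrm{reg}}$ the conormal $T^*_{T}M^{(r)}$ is a vector subbundle of $\Delta^*T^*M^{(r)}=(T^*M)^{\oplus r}$, and transversality of $\Delta$ to $T$ forces this subbundle to be exactly the diagonal copy of $T^*_SM$ — a covector $\theta$ on $M^{(r)}$ conormal to $T$ pulls back to $(\theta|_{\text{each factor}})$, and the constraint that it kill the full tangent space to $T$, together with $T$ being transverse to the diagonal, leaves precisely the conormal directions to $S$, each appearing diagonally. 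Hence over the open stratum the fibre square identifies the two projectivized bundles, the pullback is a local isomorphism there, and taking closures gives the equality of cycles up to excess components supported over $\partial S$.

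Finally I would invoke the standard intersection-theoretic fact (Fulton, Chapter 6, combined with the excess-intersection/residual-intersection behaviour of $\Delta^!$ for the fibre square \eqref{fiber square}) that the refined Gysin homomorphism $\delta^!$ applied to an irreducible cycle computes the proper transform with its intersection multiplicity, which is $1$ here because the intersection is generically transverse along $S_{\mathrm{reg}}$; the lower-dimensional components over $\partial S$ do not contribute to $\delta^!$ since that operator lands in the Chow/homology group of the fibre product $\mathbb{P}((T^*M)^{\oplus r})$ and is computed from the generic behaviour. I expect the main obstacle to be the careful bookkeeping of the excess-bundle contribution: a priori $\delta^!$ of a conormal cycle could carry a correction term $c(\text{excess bundle})\cap(\text{lower terms})$, and one must check that the relevant excess bundle for the embedding $\Delta$ restricted to the conormal stratum is trivial or that its higher Chern classes are killed by dimension reasons, so that only the leading proper-transform term survives. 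This is exactly the point where transversality of $\{{\mathcal T}_\gamma\}$ to $\Delta(M)$ is indispensable, and I would phrase the conclusion via the projection formula for $i_*$ after identifying $\delta^{-1}\mathbb{P}(\overline{T^*_TM^{(r)}})$ scheme-theoretically with $i\bigl(\mathbb{P}(\overline{T^*_SM})\bigr)$ over the generic locus.
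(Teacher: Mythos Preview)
Your approach is in the right direction but overcomplicated, and one piece of the reasoning is not quite right. The paper's argument is cleaner and avoids the excess-intersection bookkeeping you anticipate.

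Both proofs need the same geometric fact you identify: by transversality of $\Delta(M)$ with ${\mathcal T}_\gamma$, the scheme-theoretic preimage $\delta^{-1}\bigl(\mathbb{P}(\overline{T^*_{{\mathcal T}_\gamma}M^{(r)}})\bigr)$ inside $\mathbb{P}((T^*M)^{\oplus r})$ is exactly the image under the diagonal $i$ of $\mathbb{P}(\overline{T^*_{S}M})$, where $S=\Delta^{-1}({\mathcal T}_\gamma)$. The paper phrases this as the equality $\mathbb{P}\bigl(\overline{\Delta^*(T^*_{{\mathcal T}_\gamma}M^{(r)})}\bigr)=\mathbb{P}\bigl(\overline{T^*_{S}M}\bigr)$ (with $j'$ induced by $i$). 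From here the paper simply observes that the restricted map $\delta':\mathbb{P}(\overline{T^*_{S}M})\to \mathbb{P}(\overline{T^*_{{\mathcal T}_\gamma}M^{(r)}})$ is itself a regular embedding of the \emph{same} codimension $nr-n$ as $\delta$, with normal bundle pulled back from that of $\delta$. By Fulton's compatibility of refined Gysin maps in such a fibre square (\cite[Theorem~6.2(c)]{Ful}), this gives $\delta^![\mathbb{P}(\overline{T^*_{{\mathcal T}_\gamma}M^{(r)}})]=j'_*[\mathbb{P}(\overline{T^*_{S}M})]=i_*[\mathbb{P}(\overline{T^*_{S}M})]$ directly, with no excess term to analyse.

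Your route instead anticipates possible excess components over $\partial S$ and proposes to argue them away. Two comments. First, your assertion that ``lower-dimensional components over $\partial S$ do not contribute to $\delta^!$ since that operator \dots\ is computed from the generic behaviour'' is not correct as a general principle: refined Gysin maps are precisely designed to record excess contributions, not to discard them. Second, and more to the point, no such contributions arise here: transversality forces the fibre product to have exactly the expected dimension $n-1$, so there \emph{is} no excess normal bundle and no lower-dimensional component to worry about. Once you recognise this, your generic-transversality/multiplicity-one argument becomes the statement that $\delta'$ is a regular embedding of codimension $nr-n$, and you recover the paper's proof. So the obstacle you flag (``careful bookkeeping of the excess-bundle contribution'') dissolves rather than requiring work.
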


\begin{proof}
Consider the projectivized cotangent bundles $\mathbb{P}(T^{*}M)$ and  $\mathbb{P}(T^{*}(M^{(r)}))$; we denote by $\mathbb{P}((T^{*}M)^{\oplus r})\,$
the bundle $\mathbb{P}(T^{*}M\oplus \ldots \oplus T^{*}M)$.
Notice that one has a fibre square
 diagram :
$$
\xymatrix {    \mathbb{P}\left(\overline{\Delta^{*}(T^{*}_{{\mathcal T}_\gamma}M^{(r)})}\right)\ar[r]^{{\delta}'}\ar[d]_{j'} & \mathbb{P}\left(\overline{T^{*}_{{\mathcal T}_\gamma}M^{(r)}}\right) \ar[d]^{j} \\
\mathbb{P}(\Delta^{*}T^{*}(M^{(r)}))
\ar[r]^{\delta} \ar[d]_{p} & \mathbb{P}(T^{*}(M^{(r)}))  \ar[d]^{\pi^{(r)}} \\ M \ar[r]^{\Delta}  & M^{(r)}  }
$$

where $\pi^{(r)}$ is the natural proper map.

Notice that $$\mathbb{P}(\Delta^{*}T^{*}(M^{(r)}))=\mathbb{P}((T^{*}M)^{\oplus r})$$ and
$$\mathbb{P}\left(\overline{\Delta^{*}(T^{*}_{{\mathcal T}_\gamma}M^{(r)})}\right)= \mathbb{P}\left(\overline{T^*_{\Delta^{-1}({\mathcal T}_\gamma)}M} \right).$$

Thus $j'$ is induced by the diagonal embedding $i.$

Notice that $\Delta,\, \delta$ and $\delta'$ are regular embeddings of codimension $nr-n.$ Hence
$$N_{\mathbb{P}\left(\overline{\Delta^{*}(T^{*}_{{\mathcal T}_\gamma}M^{(r)})}\right)}\mathbb{P}\left(\overline{T^{*}_{{\mathcal T}_\gamma}M^{(r)}}\right)={j'}^*N_{\mathbb{P}(\Delta^{*}T^{*}(M^{(r)}))}\mathbb{P}(T^{*}(M^{(r)})).$$

Therefore $$\delta^{!}\;\left[\mathbb{P}\left(\overline{T_{{\mathcal T}_{\gamma}}^{*}M^{(r)}}\right)\right]={j'}_*\left[\mathbb{P}\left(\overline{\Delta^{*}(T^{*}_{{\mathcal T}_\gamma}M^{(r)})}\right)\right]=
i_{*}\;\left[\mathbb{P}\left(\overline{T_{\Delta^{-1}({\mathcal T}_{\gamma})}^{*}M}\right)\right].$$

Hence the result follows.

\end{proof}

\begin{corollary}\label{L:4} Let $Z(t)$ be as in Proposition \ref{t:1}. Assume that $Z(t)$ admits a Whitney stratification $\{{\mathcal T}_\gamma\}$ transversal to $\Delta(M)$. Then:
$$\delta^{!}\;[\mathbb{P}(Ch({\mathbbm 1}_{Z(t)}))]=(-1)^{nr-n}\; i_{*}\;[\mathbb{P}(Ch({\mathbbm 1}_{Z(\Delta^{*}t)}))],$$
where ${\mathbbm 1}_{(\;)}$ denotes the characteristic function.
\end{corollary}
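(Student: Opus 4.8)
The plan is to deduce Corollary \ref{L:4} directly from Proposition \ref{transversal} by specializing the constructible function $\beta$ to the characteristic function ${\mathbbm 1}_{Z(t)}$ of $Z(t)$. The only thing that needs checking is that the hypotheses of Proposition \ref{transversal} are met, namely that ${\mathbbm 1}_{Z(t)}$ is constructible with respect to a Whitney stratification of $M^{(r)}$ transversal to $\Delta(M)$, and that the pullback $\Delta^{*}({\mathbbm 1}_{Z(t)})$ is exactly ${\mathbbm 1}_{Z(\Delta^{*}t)}$.

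First I would fix, by hypothesis, a Whitney stratification $\{{\mathcal T}_\gamma\}$ of $Z(t)$ transversal to $\Delta(M)$, and extend it to a Whitney stratification of $M^{(r)}$ (adding the open stratum $M^{(r)}\setminus Z(t)$ and, if necessary, refining so that the ambient stratification is still transversal to $\Delta(M)$; this is possible since transversality is an open condition and $\Delta(M)$ is a closed submanifold). With respect to this stratification ${\mathbbm 1}_{Z(t)}$ is constructible, since $Z(t)$ is a union of strata. Next I would identify the pullback: because $\Delta$ is a local complete intersection morphism and $t$ is a regular section of $E$, Remark \ref{Fu}(1) gives $\Delta^{!}[Z(t)]=[Z(\Delta^{*}t)]$, and on the level of supports $\Delta^{-1}(Z(t))=Z(\Delta^{*}t)$ as sets; hence $\Delta^{*}({\mathbbm 1}_{Z(t)})={\mathbbm 1}_{\Delta^{-1}(Z(t))}={\mathbbm 1}_{Z(\Delta^{*}t)}$ as constructible functions on $M$.

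Having verified these two points, I would simply apply Proposition \ref{transversal} with $\beta={\mathbbm 1}_{Z(t)}$ to obtain
$$\delta^{!}\;[\mathbb{P}(Ch({\mathbbm 1}_{Z(t)}))]=(-1)^{nr-n}\; i_{*}\;[\mathbb{P}(Ch(\Delta^{*}({\mathbbm 1}_{Z(t)})))]=(-1)^{nr-n}\; i_{*}\;[\mathbb{P}(Ch({\mathbbm 1}_{Z(\Delta^{*}t)}))],$$
which is exactly the claimed formula.

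The only genuine obstacle I anticipate is the bookkeeping in the first step: one must make sure that the given Whitney stratification of $Z(t)$ can be completed to a Whitney stratification of the ambient $M^{(r)}$ that is still transversal to $\Delta(M)$, so that Proposition \ref{transversal} applies verbatim. This is a standard fact about Whitney stratifications (Thom–Mather theory guarantees existence of a Whitney stratification of $M^{(r)}$ refining any given one on the closed subvariety $Z(t)$, and since the original strata are already transversal to $\Delta(M)$ and transversality of the added strata can be arranged by a small perturbation or automatically holds for the open stratum, the transversality condition persists). Beyond this, the argument is purely a substitution into the already-proven Proposition \ref{transversal}, together with the elementary observation that the pullback of a characteristic function is the characteristic function of the preimage.
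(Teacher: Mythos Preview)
Your proposal is correct and follows exactly the paper's approach: Corollary \ref{L:4} is stated without a separate proof in the paper precisely because it is the immediate specialization of Proposition \ref{transversal} to $\beta={\mathbbm 1}_{Z(t)}$, together with the trivial identification $\Delta^{*}({\mathbbm 1}_{Z(t)})={\mathbbm 1}_{\Delta^{-1}(Z(t))}={\mathbbm 1}_{Z(\Delta^{*}t)}$. Your extra care about extending the stratification to $M^{(r)}$ is fine but essentially automatic here, since adjoining the open smooth stratum $M^{(r)}\setminus Z(t)$ yields a Whitney stratification of $M^{(r)}$ and this open stratum is trivially transversal to $\Delta(M)$.
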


\begin{remark}\label{Fulton}
\begin{enumerate}
\item  In \cite[Theorem 6.2. (a)]{Ful} it is proved the following: Consider a fiber square diagram

$$\xymatrix {    X'
\ar[r]^{\iota'} \ar[d]_{q} & Y'  \ar[d]^{p} \\ X \ar[r]^{\iota}  & Y  },$$
\noindent where $\iota$ is a regular embedding of codimension $d$ and $p$ is a proper morphism, then $\iota^!p_*(\alpha)=q_*(\iota^!(\alpha)),$ for all $\alpha \in H_{2k}(Y',\Z).$ Also in \cite[Theorem 6.2. (c)]{Ful} it is proved that if $\iota'$ is also a regular embedding of codimension $d,$ then $\iota^!(\alpha)=\iota'^!(\alpha),$ for all $\alpha \in H_{2k}(Y',\Z).$

\vspace{0.3cm}\item In  \cite[Equation (14)]{Par-Pr} Parusi\'{n}ski and Pragacz gave the following description of the Schwartz-MacPherson classes: Let $Z$ be a smooth complex manifold, let  $V$ be a closed subvariety of $Z$ and $\pi:\mathbb{P}(T^*Z)\rightarrow Z$ be the projectivized cotangent bundle of $Z,$ then the Schwartz-MacPherson class  of $V$ is given by
$$c^{SM}(V)=(-1)^{\dim Z-1}c(TZ|_V)\cap \pi_*\left(c({\mathcal{O}(1)})^{-1}\cap [\mathbb{P}(Ch({\mathbbm 1}_{V}))]\right),$$
\noindent where ${\mathcal{O}(1)}$ is the tautological line bundle of $\mathbb{P}(T^*Z).$

\end{enumerate}
\end{remark}

\begin{theorem}\label{T:2} With the assumptions of Corollary \ref{L:4} we have: $$\Delta^{!}\left( \;c^{SM}(Z(t))\;\right)=
c\left( \left( TM|_{Z(\Delta^{*}t)}\right)^{\oplus r-1} \right)\cap c^{SM}(Z(\Delta^{*}t))\,.$$\end{theorem}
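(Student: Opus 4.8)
The plan is to mimic the proof of Proposition \ref{t:1} for the Fulton--Johnson class, replacing the explicit Chern-class bookkeeping with the Parusi\'nski--Pragacz description of the Schwartz--MacPherson class recalled in Remark \ref{Fulton}(2), and feeding in the conormal-cycle statement of Corollary \ref{L:4}. Concretely, apply Remark \ref{Fulton}(2) with $Z=M^{(r)}$ and $V=Z(t)$ to write
$$c^{SM}(Z(t))=(-1)^{nr-1}\,c\bigl(TM^{(r)}|_{Z(t)}\bigr)\cap \pi^{(r)}_{*}\Bigl(c(\mathcal O(1))^{-1}\cap[\mathbb{P}(Ch({\mathbbm 1}_{Z(t)}))]\Bigr),$$
and likewise, with $Z=M$ and $V=Z(\Delta^{*}t)$,
$$c^{SM}(Z(\Delta^{*}t))=(-1)^{n-1}\,c\bigl(TM|_{Z(\Delta^{*}t)}\bigr)\cap \pi_{*}\Bigl(c(\mathcal O(1))^{-1}\cap[\mathbb{P}(Ch({\mathbbm 1}_{Z(\Delta^{*}t)}))]\Bigr).$$
Here I am implicitly using that both $Z(t)$ and $Z(\Delta^{*}t)$ carry compatible Whitney stratifications (the latter induced by pulling back along $\Delta$, as in Proposition \ref{transversal}), so that both Parusi\'nski--Pragacz formulas apply.

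Next I would push the first identity forward under $\Delta^{!}$. Using the fibre square (\ref{fiber square}) together with the projection formula for refined Gysin maps, Remark \ref{Fulton}(1), applied to the proper map $\pi^{(r)}$ and the regular embedding $\Delta$, one gets
$$\Delta^{!}\pi^{(r)}_{*}(\gamma)=p_{*}\,\delta^{!}(\gamma)$$
for $\gamma=c(\mathcal O(1))^{-1}\cap[\mathbb{P}(Ch({\mathbbm 1}_{Z(t)}))]$. Since $\mathcal O(1)$ on $\mathbb P(T^{*}(M^{(r)}))$ restricts along $\delta$ (and along $i$) to the corresponding tautological line bundle on $\mathbb P((T^{*}M)^{\oplus r})$, resp. on $\mathbb P(T^{*}M)$, Remark \ref{Fu}(2) lets me commute $\delta^{!}$ past the Chern class $c(\mathcal O(1))^{-1}$, and then Corollary \ref{L:4} gives
$$\delta^{!}\Bigl(c(\mathcal O(1))^{-1}\cap[\mathbb{P}(Ch({\mathbbm 1}_{Z(t)}))]\Bigr)=(-1)^{nr-n}\,c(\mathcal O(1))^{-1}\cap i_{*}[\mathbb{P}(Ch({\mathbbm 1}_{Z(\Delta^{*}t)}))].$$
Combining $p_{*}\circ i_{*}=\pi_{*}$ (which follows because $p\circ i=\pi$, the three being the obvious bundle projections), and noting that $\Delta^{!}$ commutes with the cap by $c(TM^{(r)}|_{Z(t)})$ via Remark \ref{Fu}(2) while $\Delta^{*}TM^{(r)}=TM^{\oplus r}$, one assembles everything into
$$\Delta^{!}c^{SM}(Z(t))=(-1)^{nr-1}(-1)^{nr-n}\,c\bigl(TM|_{Z(\Delta^{*}t)}\bigr)^{\oplus r}\cap \pi_{*}\Bigl(c(\mathcal O(1))^{-1}\cap[\mathbb{P}(Ch({\mathbbm 1}_{Z(\Delta^{*}t)}))]\Bigr).$$
Using the second Parusi\'nski--Pragacz formula to recognize the right-hand factor as $(-1)^{n-1}c(TM|_{Z(\Delta^{*}t)})^{-1}\cap c^{SM}(Z(\Delta^{*}t))$, and checking that the signs $(-1)^{nr-1}(-1)^{nr-n}(-1)^{-(n-1)}$ collapse to $+1$, yields exactly
$$\Delta^{!}c^{SM}(Z(t))=c\bigl((TM|_{Z(\Delta^{*}t)})^{\oplus r-1}\bigr)\cap c^{SM}(Z(\Delta^{*}t)).$$

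The main obstacle I anticipate is bookkeeping with the refined Gysin formalism: making sure the projection formula of Remark \ref{Fulton}(1) is applied to the correct fibre square, that $\delta^{!}$ (defined via the codimension-$(nr-n)$ regular embedding $\delta$) really does commute with capping by $c(\mathcal O(1))^{-1}$ on the non-smooth cycle $[\mathbb{P}(Ch({\mathbbm 1}_{Z(t)}))]$ — this is where Remark \ref{Fu}(2) is invoked, applied on $\mathbb P(T^{*}(M^{(r)}))$ rather than on $M^{(r)}$ itself — and that the identification $p\circ i=\pi$ of the bundle projections is used consistently so that $p_{*}i_{*}=\pi_{*}$. A secondary but real point is the careful tracking of the four sign contributions (the two $(-1)^{\ast-1}$ from Parusi\'nski--Pragacz, the $(-1)^{nr-n}$ from Corollary \ref{L:4}); they must be verified to cancel, exactly as the analogous cancellation is built into Proposition \ref{t:1}. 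Everything else is formal manipulation in $H_{2*}$ using the fibre square (\ref{fiber square}) and the compatibilities already recorded in Remarks \ref{Fu} and \ref{Fulton}.
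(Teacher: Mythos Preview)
Your plan is correct and follows essentially the same route as the paper: apply the Parusi\'nski--Pragacz formula on $M^{(r)}$, commute $\Delta^{!}$ past the Chern classes via Remark \ref{Fu}(2), use the fibre square (\ref{fiber square}) together with Remark \ref{Fulton}(1) to trade $\Delta^{!}\pi^{(r)}_{*}$ for $p_{*}\delta^{!}$, invoke Corollary \ref{L:4}, then use the projection formula for $i_{*}$ and $p\circ i=q$ to rewrite everything via the Parusi\'nski--Pragacz formula on $M$; the sign bookkeeping $(-1)^{nr-1}(-1)^{nr-n}=(-1)^{n-1}$ is exactly as in the paper. The only point you leave slightly implicit is the use of the projection formula for the proper map $i$ to pull $c(\mathcal O(1))^{-1}$ inside $i_{*}$ before composing $p_{*}i_{*}$, but you clearly have this in mind when you note that $\mathcal O(1)$ restricts correctly along $i$.
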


\begin{proof}
Applying Remark \ref{Fulton} (2) to the projectivized cotangent
bundle $\pi^{(r)}:\mathbb{P}(T^{*}M^{(r)})\rightarrow M^{(r)}$ we
obtain
$$c^{SM}(Z(t))=(-1)^{nr-1}c\left( TM^{(r)}|_{Z(t)} \right) \cap
\pi_{*}^{(r)} \left( c({\mathcal O}_{r}(1))^{-1}\cap
[\mathbb{P}(Ch({\mathbbm 1}_{Z(t)}))]\right),$$
 where ${\mathcal O}_{r}(1)$ denotes the tautological line
bundle of $\mathbb{P}(T^{*}M^{(r)}).$

Applying Remark \ref{Fu} (2) we have that
\begin{equation}\label{8}{\small \Delta^{!}\;c^{SM}(Z(t))=(-1)^{nr-1}c\left(\Delta^{*}\left(
TM^{(r)}|_{Z(t)} \right)\right) \cap \Delta^{!}\pi_{*}^{(r)} \left(
c({\mathcal O}_{r}(1)))^{-1}\cap
[\mathbb{P}(Ch({\mathbbm 1}_{Z(t)}))]\right).}\end{equation}

Applying Remark \ref{Fulton} (1) to the fiber square diagram (\ref{fiber square}) we get that
\begin{equation}\label{9}\Delta^{!}\pi_{*}^{(r)} \left( c({\mathcal
O}_{r}(1)))^{-1}\cap [\mathbb{P}(Ch({\mathbbm 1}_{Z(t)}))]\right)= p_{*} \left(
\delta^{!}(c({\mathcal O}_{r}(1))^{-1}\cap
\;[\mathbb{P}(Ch({\mathbbm 1}_{Z(t)}))])\right).\end{equation}

Applying again Remark \ref{Fu} (2) \ we have,
\begin{equation}\label{100}\Delta^{!}\pi_{*}^{(r)} \left( c({\mathcal
O}_{r}(1)))^{-1}\cap [\mathbb{P}(Ch({\mathbbm 1}_{Z(t)}))]\right)= p_{*} \left(
c(\delta^{*}{\mathcal O}_{r}(1)))^{-1}\cap
\delta^{!}\;[\mathbb{P}(Ch({\mathbbm 1}_{Z(t)}))]\right).\end{equation}
Since $\delta^{*}{\mathcal O}_{r}(1)= {\mathcal
O}_{\mathbb{P}\left((T^{*}M)^{\oplus r} \right)}(1)$ is the
tautological line bundle on the projectivization
$\mathbb{P}((T^{*}M)^{\oplus r})\rightarrow M$, by Corollary \ref{L:4} and
the equations (\ref{8}), (\ref{9}) and (\ref{100}), we get:
$$\begin{array}{l}
\Delta^{!}\left( \;c^{SM}(Z(t))\;\right)= (-1)^{n-1}c\left( \left(
TM|_{Z(\Delta^{*}t)}\right)^{\oplus r} \right)\cap\\\quad \quad\quad\quad\quad\quad\quad \,\;\cap\; p_{*} \left(
c({\mathcal O}_{\mathbb{P}\left((T^{*}M)^{\oplus r}
\right)}(1))^{-1}\cap
i_{*}\;[\mathbb{P}(Ch({\mathbbm 1}_{Z(\Delta^{*}t)}))]\right).\end{array}$$

Hence, by the projection formula for proper morphism (see \cite[Theorem 3.2 (c)]{Ful}) we have that $$\begin{array}{l}
\Delta^{!}\left( \;c^{SM}(Z(t))\;\right)= (-1)^{n-1}c\left( \left(
TM|_{Z(\Delta^{*}t)}\right)^{\oplus r-1} \right)c\left( \left(
TM|_{Z(\Delta^{*}t)}\right) \right)\cap\\\quad \quad\quad\quad\quad\quad\quad \,\;\cap\; (p\circ i)_{*} \left(
c(i^*{\mathcal O}_{\mathbb{P}\left((T^{*}M)^{\oplus r}
\right)}(1))^{-1}\cap
\;[\mathbb{P}(Ch({\mathbbm 1}_{Z(\Delta^{*}t)}))]\right).\end{array}$$

Now, using the fact that $i^*{\mathcal O}_{\mathbb{P}\left((T^{*}M)^{\oplus r}
\right)}(1)={\mathcal O}_{\mathbb{P}\left((T^{*}M)
\right)}(1)$ and that $p\circ i =q:\mathbb{P}(T^{*}M)\rightarrow M$ is the projectivized cotangent morphism we have that

$$\begin{array}{l}
\Delta^{!}\left( \;c^{SM}(Z(t))\;\right)= (-1)^{n-1}c\left( \left(
TM|_{Z(\Delta^{*}t)}\right)^{\oplus r-1} \right)c\left( \left(
TM|_{Z(\Delta^{*}t)}\right) \right)\cap\\\quad \quad\quad\quad\quad\quad\quad \,\;\cap\; q_{*} \left(
c({\mathcal O}_{\mathbb{P}\left((T^{*}M)
\right)}(1))^{-1}\cap
\;[\mathbb{P}(Ch({\mathbbm 1}_{Z(\Delta^{*}t)}))]\right).\end{array}$$

Applying Remark \ref{Fulton} (2) to the
projectivize cotangent bundle $q:\mathbb{P}(T^{*}M)\rightarrow M$ we obtain that
$$c^{SM}(Z(\Delta^{*}t))=(-1)^{n-1}c\left( \left(
TM|_{Z(\Delta^{*}t)}\right) \right)\cap q_{*} \left(
c({\mathcal O}_{\mathbb{P}\left((T^{*}M)
\right)}(1))^{-1}\cap
\;[\mathbb{P}(Ch({\mathbbm 1}_{Z(\Delta^{*}t)}))]\right)$$

Hence we have that
$$\Delta^{!}\left( \;c^{SM}(Z(t))\;\right)=c\left( \left( TM|_{Z(\Delta^{*}t)}\right)^{\oplus r-1}
\right)\cap c^{SM}(Z(\Delta^{*}t)).$$

\end{proof}

Theorem \ref{T:2} is a Verdier-Riemann-Roch type  formula for the Schwarz-MacPherson classes (cf. \cite{Sch2}).
Analogously, the next result is a Verdier-Riemann-Roch type theorem for the Milnor classes. The proof is a straightforward application of Proposition  \ref{t:1} and Theorem \ref{T:2}.

\begin{corollary} \label{10}With the assumptions of Corollary \ref{L:4} we have: $$\Delta^{!}{\mathcal M}(Z(t))=(-1)^{nr-n}c\left( \left( TM|_{Z(\Delta^{*}t)}\right)^{\oplus
r-1} \right)\cap{\mathcal M}(Z(\Delta^{*}t))\,.$$\end{corollary}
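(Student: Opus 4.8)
The plan is to derive Corollary \ref{10} directly from the definition of the Milnor class together with the two Verdier–Riemann–Roch formulae already established, namely Proposition \ref{t:1} for the Fulton–Johnson classes and Theorem \ref{T:2} for the Schwartz–MacPherson classes. Recall that, by definition,
$${\mathcal M}(Z(t))=(-1)^{nr-d}\left(c^{FJ}(Z(t))-c^{SM}(Z(t))\right)$$
since $Z(t)$ has dimension $nr-d$ inside $M^{(r)}$, and likewise
$${\mathcal M}(Z(\Delta^{*}t))=(-1)^{n-d}\left(c^{FJ}(Z(\Delta^{*}t))-c^{SM}(Z(\Delta^{*}t))\right)$$
since $Z(\Delta^{*}t)$ has dimension $n-d$ inside $M$.

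First I would apply the refined Gysin homomorphism $\Delta^{!}$ to the defining equation for ${\mathcal M}(Z(t))$, using its $\Z$-linearity; this gives
$$\Delta^{!}{\mathcal M}(Z(t))=(-1)^{nr-d}\left(\Delta^{!}c^{FJ}(Z(t))-\Delta^{!}c^{SM}(Z(t))\right).$$
Next I would substitute the right-hand sides furnished by Proposition \ref{t:1} and Theorem \ref{T:2}, both of which produce the same cap-factor $c\left(\left(TM|_{Z(\Delta^{*}t)}\right)^{\oplus r-1}\right)$. Factoring this common Chern class out of the difference yields
$$\Delta^{!}{\mathcal M}(Z(t))=(-1)^{nr-d}\,c\left(\left(TM|_{Z(\Delta^{*}t)}\right)^{\oplus r-1}\right)\cap\left(c^{FJ}(Z(\Delta^{*}t))-c^{SM}(Z(\Delta^{*}t))\right).$$
Finally I would recognize the parenthesized expression as $(-1)^{-(n-d)}{\mathcal M}(Z(\Delta^{*}t))=(-1)^{n-d}{\mathcal M}(Z(\Delta^{*}t))$, and collect the signs: $(-1)^{nr-d}\cdot(-1)^{n-d}=(-1)^{nr+n-2d}=(-1)^{nr+n}=(-1)^{nr-n}$, which is exactly the sign in the statement.

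There is essentially no obstacle here: the work was already done in proving Proposition \ref{t:1} and Theorem \ref{T:2}, and the only things to be careful about are the bookkeeping of the dimension-dependent signs $(-1)^{\dim}$ appearing in the definition of the Milnor class in both ambient spaces, and the fact that $\Delta^{!}$ is additive on homology classes so that it distributes over the difference $c^{FJ}-c^{SM}$. The cap-factor is literally identical in the two formulae, so it pulls out of the difference without any compatibility check beyond noting that both formulae are stated over the same space $Z(\Delta^{*}t)$ with the same bundle $TM|_{Z(\Delta^{*}t)}$. Hence the proof is the one-line computation indicated above.
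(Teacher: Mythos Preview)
Your proof is correct and follows exactly the approach the paper intends: the paper itself simply states that the result is ``a straightforward application of Proposition \ref{t:1} and Theorem \ref{T:2},'' and your computation, including the sign bookkeeping $(-1)^{nr-d}\cdot(-1)^{n-d}=(-1)^{nr-n}$, spells this out correctly.
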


\section{Intersection product formulas}\label{sec. main lemma}

As before, let $M$ be an $n$-dimensional compact complex
analytic manifold. Let $\{E_{i}\}$ be a finite collection of
holomorphic vector bundles  over $M$ of rank $d_{i}$, $1 \leq i\leq
r$. For each of these bundles, let  $s_{i}$ be a regular
holomorphic section
 and $X_{i}$ the $(n-d_{i})$-dimensional local
complete intersections  defined by the zeroes of
 $s_{i}$.

 In this section we assume that  we can equip the product  $X_1 \times \ldots \times X_r$ with a Whitney stratification such that
 the diagonal embedding $\Delta$ is transversal to all
strata. This
transversality condition is necessary for using Proposition \ref{transversal} and this is precisely the transversality condition that we need in Theorem \ref {theorem-1}.

 Let $p_i:M^{(r)}\rightarrow M$ be the $i^{th}$-projection,
then we have the holomorphic exterior product section
$$s=s_1\oplus\ldots\oplus s_r:M^{(r)}\rightarrow
p_1^*E_1\oplus\ldots \oplus p_r^*E_r,$$ given by
$s(x_1,\dots,x_r)=(s_1(x_1),\dots, s_r(x_r)).$ Then
$Z(s)=X_1\times\ldots \times X_r$ and $Z(\Delta^*(s))=X_{1}\cap
\ldots \cap X_{r}.$ Set $X=Z(\Delta^*(s)).$
The next result describes the total Schwartz-MacPherson class of $X$ in terms
of the total Schwartz-MacPherson classes of  the $X_i.$

\begin{proposition}\label{SM-inter}
$$c^{SM}(X)=  c\left( \left(
TM|_{X}\right)^{\oplus r-1} \right)^{-1} \, \cap \,\;\Big(c^{SM}(X_{1}) \cdot \ldots \cdot c^{SM}(X_{r})\Big).$$
 \end{proposition}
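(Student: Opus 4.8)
The plan is to reduce Proposition~\ref{SM-inter} to the Verdier--Riemann--Roch statement of Theorem~\ref{T:2} applied to the exterior-product section $s = s_1 \oplus \cdots \oplus s_r$ on $M^{(r)}$. As noted just before the statement, $Z(s) = X_1 \times \cdots \times X_r$ and $Z(\Delta^*(s)) = X_1 \cap \cdots \cap X_r = X$, and the transversality hypothesis of this section guarantees that a Whitney stratification of $Z(s)$ exists which is transversal to $\Delta(M)$, so the hypotheses of Corollary~\ref{L:4} and hence of Theorem~\ref{T:2} are met. Therefore
\begin{equation*}
\Delta^{!}\bigl(c^{SM}(X_1 \times \cdots \times X_r)\bigr) = c\left(\bigl(TM|_X\bigr)^{\oplus r-1}\right) \cap c^{SM}(X).
\end{equation*}
The whole proof then hinges on identifying the left-hand side with $c^{SM}(X_1)\cdot\ldots\cdot c^{SM}(X_r)$, the refined intersection product.

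First I would invoke the multiplicativity of Schwartz--MacPherson classes under products: $c^{SM}(X_1 \times \cdots \times X_r) = c^{SM}(X_1) \times \cdots \times c^{SM}(X_r)$, the exterior (cross) product in $H_{2*}(X_1 \times \cdots \times X_r)$. This is a standard property of the MacPherson transformation (it follows from $c_*$ being a natural transformation compatible with exterior products of constructible functions, since $\mathbbm{1}_{X_1 \times \cdots \times X_r} = \mathbbm{1}_{X_1} \times \cdots \times \mathbbm{1}_{X_r}$). Then by the very definition of the refined intersection product recalled in the excerpt, $\Delta^{!}(\alpha_1 \times \cdots \times \alpha_r) = \alpha_1 \cdot \ldots \cdot \alpha_r$, so
\begin{equation*}
\Delta^{!}\bigl(c^{SM}(X_1 \times \cdots \times X_r)\bigr) = \Delta^{!}\bigl(c^{SM}(X_1) \times \cdots \times c^{SM}(X_r)\bigr) = c^{SM}(X_1) \cdot \ldots \cdot c^{SM}(X_r).
\end{equation*}

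Combining the two displays gives
\begin{equation*}
c^{SM}(X_1) \cdot \ldots \cdot c^{SM}(X_r) = c\left(\bigl(TM|_X\bigr)^{\oplus r-1}\right) \cap c^{SM}(X),
\end{equation*}
and since $c\left(\bigl(TM|_X\bigr)^{\oplus r-1}\right)$ is a unit in the cohomology ring (its degree-zero part is $1$), it is invertible, and capping with its inverse yields the claimed formula. I would also remark that $E$ in Theorem~\ref{T:2} is here the bundle $p_1^*E_1 \oplus \cdots \oplus p_r^*E_r$ on $M^{(r)}$, whose total rank is $d = d_1 + \cdots + d_r$, so that $Z(\Delta^*(s))$ has the expected dimension $n - d_1 - \cdots - d_r$ and everything is consistent.

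The main obstacle is bookkeeping rather than conceptual: I must make sure that Theorem~\ref{T:2} is genuinely applicable, i.e.\ that $s = s_1 \oplus \cdots \oplus s_r$ really is a \emph{regular} section of $p_1^*E_1 \oplus \cdots \oplus p_r^*E_r$ (it is, because each $s_i$ is regular and the vanishing loci sit in complementary factors, so the Koszul complexes tensor together to a regular one), and that the transversality hypothesis of this section is exactly the transversality to $\Delta(M)$ required by Corollary~\ref{L:4}. The only other subtlety is the product formula $c^{SM}(A \times B) = c^{SM}(A) \times c^{SM}(B)$; if one wants to avoid citing it, one can instead feed $\mathbbm{1}_{X_1 \times \cdots \times X_r} = \mathbbm{1}_{X_1} \times \cdots \times \mathbbm{1}_{X_r}$ directly into the Parusi\'nski--Pragacz description of $c^{SM}$ recalled in Remark~\ref{Fulton}(2) and track the conormal cycles through the fibre square, but citing the product property is cleaner and standard.
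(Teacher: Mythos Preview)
Your proposal is correct and follows essentially the same route as the paper: apply Theorem~\ref{T:2} to the exterior section $s=s_1\oplus\cdots\oplus s_r$, then use multiplicativity of Schwartz--MacPherson classes under products (the paper cites Kwieci\'nski \cite{Kw} for this) together with the definition of the refined intersection product $\Delta^!(\alpha_1\times\cdots\times\alpha_r)=\alpha_1\cdot\ldots\cdot\alpha_r$, and invert. Your additional remarks on the regularity of $s$ and the invertibility of $c\bigl((TM|_X)^{\oplus r-1}\bigr)$ are sound and simply make explicit what the paper leaves implicit.
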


\begin{proof} By Theorem \ref{T:2} we have that
$$\Delta^{!}\left( \;c^{SM}(Z(s))\;\right)=c\left( \left( TM|_{Z(\Delta^{*}s)}\right)^{\oplus r-1}
\right)\cap c^{SM}(Z(\Delta^{*}s)).$$
Hence
$$c^{SM}(X))=c\left( \left( TM|_{X}\right)^{\oplus r-1}
\right)^{-1}\cap \Delta^{!}\left( c^{SM}(X_1\times\ldots \times X_r)\right).$$
Now, M. Kwieci\'{n}ski proved in \cite{Kw}  that Schwartz-MacPherson classes behave well with
respect to the exterior products, that is $$c^{SM}(X_1\times\ldots \times X_r)=c^{SM}(X_1)\times\ldots \times c^{SM}(X_r).$$
Hence $$\Delta^{!}\left(c^{SM}(X_1\times\ldots \times X_r)\right )=c^{SM}(X_1)\cdot\ldots \cdot c^{SM}(X_r)$$
\noindent and the result follows.

\end{proof}

%\vskip.2cm

\begin{remark}\label{Fulton 3.2.8}
In \cite[Example 3.2.8.]{Ful} it is proved the following: Let $Y$ and $Z$ be schemes, $p$ and $q$ the projections from $Y\times Z$ to $Y$ and $Z,$ $E$ and $F$ vector bundles on $Y$ and $Z,$
$\alpha \in H_*(Y,\Z)$ and $\beta\in H_*(Z,\Z).$ Then $$\left(c(E)\cap \alpha\right)\times \beta=c(p^*E)\cap(\alpha\times\beta)$$
\noindent and $$\left(c(E)\cap \alpha\right)\times\left(c(F)\cap\beta\right)=c(p^*E\oplus q^*F)\cap(\alpha\times\beta).$$

Since $c(p^*E)\cap((c(E)^{-1}\cap\alpha )\times\beta)=\alpha\times\beta$ we have that  $$(c(E)^{-1}\cap\alpha)\times\beta=c(p^*E)^{-1}\cap(\alpha\times\beta).$$
Analogously, $$\left(c(E)^{-1}\cap \alpha\right)\times\left(c(F)^{-1}\cap\beta\right)=c(p^*E\oplus q^*F)^{-1}\cap(\alpha\times\beta).$$
\end{remark}

 In \cite{O-Y} was stated without proof that Fulton-Johnson classes behave well with
respect to the exterior products.  For completeness we include it proof here

\begin{lemma}\label{FJ-product}
$$c^{FJ}(X_1\times \ldots \times X_r)=  c^{FJ}(X_{1}) \times \ldots \times c^{FJ}(X_{r}).$$
\end{lemma}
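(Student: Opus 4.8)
The plan is to reduce to the case $r=2$ by induction and then to compute both sides using the definition of the Fulton-Johnson class as a Chern class of the virtual tangent bundle capped with the fundamental class. For $r=2$, write $Y = X_1 \times X_2 \subset M \times M$; this is the zero scheme of the regular section $s_1 \oplus s_2$ of $p_1^* E_1 \oplus p_2^* E_2$, so by definition $c^{FJ}(Y) = c(T(M\times M)|_Y)\, c(p_1^*E_1 \oplus p_2^* E_2|_Y)^{-1} \cap [Y]$. First I would observe that $[X_1 \times X_2] = [X_1] \times [X_2]$, that $T(M\times M) = p_1^* TM \oplus p_2^* TM$, and that $p_i|_Y$ restricted to $Y$ is just the composition $Y \to X_i \hookrightarrow M$ followed by the ambient projection, so all the bundles appearing are pullbacks under the two projections $Y \to X_1$ and $Y \to X_2$.

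The key computation is then to apply Remark \ref{Fulton 3.2.8} repeatedly. Grouping the Chern class factors according to which projection they come from, we have on $Y = X_1 \times X_2$
$$c^{FJ}(X_1 \times X_2) = c\big(q_1^*(TM|_{X_1}) \oplus q_2^*(TM|_{X_2})\big)\, c\big(q_1^*(E_1|_{X_1}) \oplus q_2^*(E_2|_{X_2})\big)^{-1} \cap \big([X_1]\times [X_2]\big),$$
where $q_1, q_2$ denote the two projections of $X_1\times X_2$. Combining the total-Chern-class factors into one expression of the form $c(q_1^*A_1 \oplus q_2^*A_2)\, c(q_1^*B_1\oplus q_2^* B_2)^{-1}$, the second formula of Remark \ref{Fulton 3.2.8} (applied to the tangent parts) together with its final displayed formula (applied to the inverse Chern classes of the bundle parts) lets me split this as the exterior product
$$\Big(c(TM|_{X_1})\, c(E_1|_{X_1})^{-1} \cap [X_1]\Big) \times \Big(c(TM|_{X_2})\, c(E_2|_{X_2})^{-1}\cap [X_2]\Big) = c^{FJ}(X_1) \times c^{FJ}(X_2).$$
Strictly speaking one must interleave the two operations in Remark \ref{Fulton 3.2.8}, first introducing $q_2^*$ of the $X_2$-factors, then $q_1^*$ of the $X_1$-factors, and at each stage use that multiplication by a total Chern class and its inverse cancel; this is the only place where a small amount of bookkeeping is needed, but it is purely formal.

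Finally, the induction on $r$ is immediate: writing $X_1 \times \cdots \times X_r = (X_1\times \cdots \times X_{r-1}) \times X_r$ and noting that $X_1\times\cdots\times X_{r-1}$ is again the zero scheme of a regular section of a sum of pulled-back bundles over $M^{(r-1)}$ — hence a local complete intersection to which the $r=2$ case applies — we get $c^{FJ}(X_1\times\cdots\times X_r) = c^{FJ}(X_1\times\cdots\times X_{r-1}) \times c^{FJ}(X_r)$, and the inductive hypothesis finishes the argument. I expect no serious obstacle here: the statement is essentially a multiplicativity property of virtual tangent bundles under products, and the only subtlety is making the Chern-class bookkeeping in Remark \ref{Fulton 3.2.8} precise, since that remark is stated for a single bundle on each factor rather than for a difference of bundles. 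The cleanest way around this is to handle the tangent part and each $E_i$ part as separate applications of the remark and multiply the resulting identities.
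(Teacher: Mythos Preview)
Your proposal is correct and follows essentially the same approach as the paper: both arguments unfold the definition of $c^{FJ}$, split $T(M^{(r)})$ and the defining bundle as direct sums of pullbacks, and invoke Remark~\ref{Fulton 3.2.8} to separate the exterior product. The only organizational difference is that the paper carries out the computation for general $r$ in a single chain of equalities, whereas you reduce to $r=2$ and then induct; the induction is harmless but unnecessary, since Remark~\ref{Fulton 3.2.8} iterates directly to $r$ factors.
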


\begin{proof}
$$\begin{array}{lcl}
c^{FJ}(X) & = & c\left(TM^{(r)}|_{X_1\times \ldots \times X_r}\right)c\left(p_1^*E_1\oplus\ldots \oplus p_r^*E_r\right)^{-1}\cap \left[X_1\times \ldots \times X_r\right] \\
\\
 & = & c\left(p_1^*TM|_{X_1}\oplus \ldots \oplus p_r^*TM|_{X_r}\right)c\left(p_1^*E_1\oplus\ldots \oplus p_r^*E_r\right)^{-1}\cap \left(\left[X_1\right]\times \ldots \times \left[X_r\right] \right)\\
    \\
          & = & \left(c(TM|_{X_1})c(E_1)^{-1}\cap\left[X_1\right]\right)\times\ldots\times \left(c(TM|_{X_r})c(E_r)^{-1}\cap\left[X_r\right]\right) \\
    \\
                    & = & c^{FJ}(X_{1}) \times \ldots \times c^{FJ}(X_{r}).
\end{array}$$
\noindent where the third equality follows by Remark \ref{Fulton 3.2.8}.
\end{proof}

\begin{proposition}\label{FJ-inter}
$$c^{FJ}(X)=  c\left( \left(
TM|_{X}\right)^{\oplus r-1} \right)^{-1} \, \cap \,\;\Big(c^{FJ}(X_{1}) \cdot \ldots \cdot c^{FJ}(X_{r})\Big).$$
\end{proposition}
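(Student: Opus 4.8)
The plan is to mirror exactly the argument just given for the Schwartz-MacPherson classes in Proposition \ref{SM-inter}, replacing the Verdier-Riemann-Roch input from Theorem \ref{T:2} by its Fulton-Johnson analogue, Proposition \ref{t:1}, and replacing Kwieci\'nski's exterior product formula by its Fulton-Johnson counterpart, Lemma \ref{FJ-product}. Concretely, first I would apply Proposition \ref{t:1} to the exterior product section $s=s_1\oplus\cdots\oplus s_r$ on $M^{(r)}$, whose zero scheme is $Z(s)=X_1\times\cdots\times X_r$ and whose pull-back section under the diagonal has zero scheme $Z(\Delta^*s)=X_1\cap\cdots\cap X_r=X$. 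This yields
$$\Delta^{!}\bigl(c^{FJ}(Z(s))\bigr)=c\bigl((TM|_{X})^{\oplus r-1}\bigr)\cap c^{FJ}(X).$$

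Next I would rearrange this identity, using that $c\bigl((TM|_X)^{\oplus r-1}\bigr)$ is a unit in the cohomology ring (its degree-zero part is $1$), to obtain
$$c^{FJ}(X)=c\bigl((TM|_{X})^{\oplus r-1}\bigr)^{-1}\cap\Delta^{!}\bigl(c^{FJ}(X_1\times\cdots\times X_r)\bigr).$$
Then I would invoke Lemma \ref{FJ-product}, which gives $c^{FJ}(X_1\times\cdots\times X_r)=c^{FJ}(X_1)\times\cdots\times c^{FJ}(X_r)$, and recall that the refined intersection product was defined precisely by $\alpha_1\cdot\ldots\cdot\alpha_r=\Delta^{!}(\alpha_1\times\cdots\times\alpha_r)$, so that
$$\Delta^{!}\bigl(c^{FJ}(X_1\times\cdots\times X_r)\bigr)=c^{FJ}(X_1)\cdot\ldots\cdot c^{FJ}(X_r).$$
Substituting this into the previous display gives exactly the claimed formula.

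One small point worth checking explicitly is that Proposition \ref{t:1} is being applied to a bundle $E=p_1^*E_1\oplus\cdots\oplus p_r^*E_r$ on $M^{(r)}$ rather than to an arbitrary bundle, but this is harmless: Proposition \ref{t:1} was stated for any regular section $t$ of any holomorphic bundle on $M^{(r)}$, so it applies verbatim. I should also note that the Fulton-Johnson version needs no transversality hypothesis at all — unlike the Schwartz-MacPherson case, where Corollary \ref{L:4} required a Whitney stratification of $Z(t)$ transversal to $\Delta(M)$ — because Proposition \ref{t:1} is a purely K-theoretic/intersection-theoretic statement. Thus the proof here is actually shorter and cleaner than that of Proposition \ref{SM-inter}.

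Honestly there is no real obstacle: every ingredient has already been established earlier in the paper, and the argument is a three-line formal manipulation. If anything, the only thing to be mildly careful about is keeping the two ``layers'' of virtual-bundle bookkeeping straight — the factor $c\bigl((TM|_X)^{\oplus r-1}\bigr)$ arising from $\Delta^*TM^{(r)}=(TM)^{\oplus r}$ versus the single $TM|_X$ sitting inside each $c^{FJ}(X_i)$ — but this was already handled in the proof of Proposition \ref{t:1} and need not be redone. So I would simply write out the four displayed equalities above in sequence, citing Proposition \ref{t:1}, Lemma \ref{FJ-product}, and the definition of the refined intersection product, and conclude.
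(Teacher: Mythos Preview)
Your proposal is correct and follows essentially the same argument as the paper's own proof: apply Proposition \ref{t:1} to the exterior product section $s$, invert the Chern class factor, invoke Lemma \ref{FJ-product} for the exterior product, and then use the definition of the refined intersection product $\alpha_1\cdot\ldots\cdot\alpha_r=\Delta^{!}(\alpha_1\times\cdots\times\alpha_r)$. Your additional remark that no transversality hypothesis is needed here (in contrast to the Schwartz-MacPherson case) is accurate and worth noting, though the paper does not make this explicit.
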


\begin{proof}
 By Proposition \ref{t:1} we have that
$$\Delta^{!}\left( \;c^{FJ}(Z(s))\;\right)=c\left( \left( TM|_{Z(\Delta^{*}s)}\right)^{\oplus r-1}
\right)\cap c^{FJ}(Z(\Delta^{*}s)).$$
Hence
$$c^{FJ}(X))=c\left( \left( TM|_{X}\right)^{\oplus r-1}
\right)^{-1}\cap \Delta^{!}\left( c^{FJ}(X_1\times\ldots \times X_r)\right).$$
By Lemma \ref{FJ-product} we have that
$$c^{FJ}(X))=c\left( \left( TM|_{X}\right)^{\oplus r-1}
\right)^{-1}\cap \Delta^{!}\left( c^{FJ}(X_1)\times\ldots \times c^{FJ}(X_r)\right).$$
Since $$\Delta^{!}\left( c^{FJ}(X_1)\times\ldots \times c^{FJ}(X_r)\right)=c^{FJ}(X_{1}) \cdot \ldots \cdot c^{FJ}(X_{r})$$
\noindent  the result follows.
\end{proof}

\begin{proaf}{\sc of Theorem \ref{theorem-1}:}
This follows immediately from Proposition \ref{SM-inter}  and Proposition \ref{FJ-inter}.
\end{proaf}

\begin{example} {\rm Let $Z_1$ and $Z_2$ be the hypersurfaces of $\mathbb{P}^4$ defined by $$H(x_0,\dots, x_4)=x_0 x_1 \ \quad  \hbox{and} \quad
G(x_0,\dots, x_4)=x_3 \;.$$
The line bundle of $Z_1$ is ${\mathcal O}(2H)$, where $H= c_1({\mathcal O}(1))$, so the class of the virtual tangent bundle of $Z_1$ is:
$$(1+H)^5 2H / (1+ 2H) = 2H + 6H^2 + 8H^3 + 4H^4,$$
while the Schwartz-MacPherson class is, by the inclusion-exclusion formula in \cite{Aluffi2}:
$$2 c(T\mathbb{P}^3) - c(T\mathbb{P}^2) = 2( (1+H)^4 H ) - (1+H)^3 H^2 = 2H + 7H^2 + 9H^3 + 5H^4$$
Therefore the Milnor class of $Z_1$ is $H^2 + H^3 + H^4$.
On the other hand, since $Z_2$ is smooth, the Schwartz-MacPherson class and the Fulton-Johnson class of $Z_2$ are $(1+H)^4 H= H+4H^2+6H^3+4H^4$.
Therefore, by Theorem \ref{theorem-1}, the Milnor class of $Z_1 \cap Z_2$ is given by
$${\mathcal M}(Z_1 \cap Z_2)=-c(T\mathbb{P}^4)^{-1}\cap c^{SM}(Z_2){\mathcal M}(Z_1)= -H^3.$$
}
\end{example}

\begin{remark} Take the
complete intersection $X=X_1 \cap X_2$, where $X_1$ is a smooth quadric surface in $\mathbb P^3$ and $X_2$ is a tangent plane get two distinct lines meeting at a point. The Milnor class of $X$ is simply the class of a point, but the Milnor classes of $X_1$ and $X_2$ are both zero because they both are smooth. This shows that a
transversality condition is necessary for our formula in \ref{main-lemma}.\end{remark}

\section{Applications to line bundles}

\begin{theorem} \label{main-lemma} With the conditions of Theorem \ref{theorem-1}
 we have:
 $${\mathcal M}(X)=(-1)^{nr-n}c\left( \left(
TM|_{X}\right)^{\oplus r-1} \right)^{-1}\cap
\sum\;(-1)^{(n-d_{1})\epsilon_{1}+\ldots+(n-d_{r})\epsilon_{r}}
 P_{1}\;\cdot\ldots\cdot\; P_{r}\in H_{*}(X),$$
where the sum runs over all choices  of $P_{i}\in \left\{{\mathcal
M}(X_i),c^{SM}(X_i)\right\},\,i=1,\dots,r,$  except
$(P_{1},\ldots,P_{r})=(c^{SM}(X_1),\ldots,c^{SM}(X_r))$ and where
$$\epsilon_{i}=\left\{\begin{array}{rcl} 1&,& \mbox{if} \;P_{i}=c^{SM}(X_i)\\
0&,& if \;P_{i}={\mathcal M}(X_i)\\
\end{array}\right. .$$

\end{theorem}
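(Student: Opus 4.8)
The plan is to derive Theorem \ref{main-lemma} purely algebraically from Theorem \ref{theorem-1}(iii) by substituting the definition of the Milnor classes of the factors $X_i$ into the product on the right-hand side. Recall that by definition ${\mathcal M}(X_i) = (-1)^{n-d_i}\bigl(c^{FJ}(X_i) - c^{SM}(X_i)\bigr)$, so that $c^{FJ}(X_i) = c^{SM}(X_i) + (-1)^{n-d_i}{\mathcal M}(X_i)$. The idea is to treat $c^{FJ}(X_i)$ as a two-term sum and expand the product $c^{FJ}(X_1)\cdot\ldots\cdot c^{FJ}(X_r)$ over all $2^r$ ways of choosing, in each factor, either the summand $c^{SM}(X_i)$ or the summand $(-1)^{n-d_i}{\mathcal M}(X_i)$.

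Concretely, I would first write
$$c^{FJ}(X_1)\cdot\ldots\cdot c^{FJ}(X_r) = \prod_{i=1}^{r}\Bigl(c^{SM}(X_i) + (-1)^{n-d_i}{\mathcal M}(X_i)\Bigr) = \sum_{\epsilon\in\{0,1\}^r}(-1)^{\sum_i (n-d_i)\epsilon_i^{\,c}} \, Q_1^{\epsilon_1}\cdot\ldots\cdot Q_r^{\epsilon_r},$$
where in each term $Q_i^{\epsilon_i}$ is $c^{SM}(X_i)$ when $\epsilon_i = 1$ and ${\mathcal M}(X_i)$ when $\epsilon_i = 0$, and the sign collects a factor $(-1)^{n-d_i}$ exactly from those indices with $\epsilon_i = 0$; writing the exponent instead over the indices with $\epsilon_i = 1$ just reindexes the complementary set, matching the convention for $\epsilon_i$ in the statement. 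The single term corresponding to $\epsilon = (1,\ldots,1)$ is exactly $c^{SM}(X_1)\cdot\ldots\cdot c^{SM}(X_r)$, with trivial sign. Therefore
$$c^{FJ}(X_1)\cdot\ldots\cdot c^{FJ}(X_r) - c^{SM}(X_1)\cdot\ldots\cdot c^{SM}(X_r) = \sum_{\epsilon\neq(1,\ldots,1)} (-1)^{(n-d_1)\epsilon_1+\ldots+(n-d_r)\epsilon_r}\, P_1\cdot\ldots\cdot P_r,$$
with the $P_i\in\{{\mathcal M}(X_i), c^{SM}(X_i)\}$ and $\epsilon_i$ exactly as in the statement (after the reindexing of which complementary set carries the sign). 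Substituting this into Theorem \ref{theorem-1}(iii) and noting $(-1)^{\dim X} = (-1)^{n-d_1-\ldots-d_r}$; since $n - d_1 - \ldots - d_r$ and $nr - n - (n-d_1) - \ldots$ need to be reconciled, I would check that the overall prefactor $(-1)^{\dim X}$ in (iii) agrees with $(-1)^{nr-n}$ in the statement modulo the way the signs have been distributed into the sum — this is the one genuine bookkeeping point.

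The main (and really only) obstacle is the sign accounting: making sure that the prefactor $(-1)^{\dim X}$ in Theorem \ref{theorem-1}(iii), together with the signs $(-1)^{n-d_i}$ coming from each ${\mathcal M}(X_i)$ and the reindexing of the exponent from "$\epsilon_i = 0$" to "$\epsilon_i = 1$", combine to give exactly $(-1)^{nr-n}$ out front and $(-1)^{(n-d_1)\epsilon_1+\ldots+(n-d_r)\epsilon_r}$ inside the sum. The key identity to verify is that for a term with $k := \#\{i : \epsilon_i = 0\}$ missing factors (i.e.\ where $P_i = {\mathcal M}(X_i)$), the sign picked up from the $k$ copies of $(-1)^{n-d_i}$ plus the ambient $(-1)^{\dim X}$ equals $(-1)^{nr-n}$ times the advertised $(-1)^{(n-d_1)\epsilon_1+\ldots+(n-d_r)\epsilon_r}$; this reduces to a congruence mod $2$ among $\dim X = n - \sum d_i$, $\sum_{\epsilon_i=0}(n-d_i)$, $\sum_{\epsilon_i=1}(n-d_i)$, and $nr - n$, which holds because their alternating combination telescopes. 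Once this congruence is checked, the proof is a single display-line substitution, and the $r=2$ and $r=3$ formulas quoted in the Introduction serve as consistency checks.
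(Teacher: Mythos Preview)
Your approach is correct and is actually a bit more elementary than the paper's. The paper proves Theorem \ref{main-lemma} by invoking Corollary \ref{10} (the Verdier--Riemann--Roch formula $\Delta^{!}{\mathcal M}(Z(s))=(-1)^{nr-n}c((TM|_X)^{\oplus r-1})\cap{\mathcal M}(X)$) and then quoting the Ohmoto--Yokura product formula \cite[Corollary 3.1]{O-Y} for ${\mathcal M}(X_1\times\cdots\times X_r)$, finally applying $\Delta^{!}$ to turn exterior products into refined intersection products. You instead start from Theorem \ref{theorem-1}(iii), which already lives on $X$, and expand $\prod_i c^{FJ}(X_i)=\prod_i\bigl(c^{SM}(X_i)+(-1)^{n-d_i}{\mathcal M}(X_i)\bigr)$ directly; in effect you are reproving the Ohmoto--Yokura expansion by a one-line binomial argument and avoiding the external citation. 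Both routes rest on the same analytic input (Proposition \ref{t:1} and Theorem \ref{T:2}), so neither is deeper; yours is just more self-contained.

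One suggestion: your sign discussion is sound but presented a little opaquely (the undefined symbol $\epsilon_i^{\,c}$ and the ``reindexing'' remark obscure more than they explain). The clean way to say it is: the term indexed by $\epsilon$ in the expansion carries the sign $(-1)^{\sum_{\epsilon_i=0}(n-d_i)}$, and combined with the prefactor $(-1)^{\dim X}=(-1)^{n-D}$ (where $D=\sum d_i$) one must check
\[
(n-D)+\sum_{\epsilon_i=0}(n-d_i)\;\equiv\;(nr-n)+\sum_{\epsilon_i=1}(n-d_i)\pmod 2,
\]
which holds because the sum of both sides is $(n-D)+(nr-n)+\sum_i(n-d_i)=2(nr-D)$. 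Writing it this way removes any ambiguity.
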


\begin{proof}
By Corollary \ref{10},
$$\Delta^{!}{\mathcal M}(Z(s))=(-1)^{nr-n}c\left( \left( TM|_{Z(\Delta^{*}s)}\right)^{\oplus
r-1} \right)\cap{\mathcal M}(Z(\Delta^{*}s)).$$ Thus,
$${\mathcal M}(X)=(-1)^{nr-n}c\left( \left(
TM|_{Z(\Delta^{*}s)}\right)^{\oplus r-1}
\right)^{-1}\cap\Delta^{!}{\mathcal M}(X_{1}\times \ldots \times
X_{r}),$$
 and using the description of the Milnor classes of a product due to   \cite[Corollary 3.1]{O-Y}, we have:
$${\mathcal M}(X)=(-1)^{nr-n}c\left( \left(
TM|_{X}\right)^{\oplus r-1} \right)^{-1}\cap
\;\sum(-1)^{(n-d_{1})\epsilon_{1}+\ldots+(n-d_{r})\epsilon_{r}}\Delta^{!}\left(
 P_{1}\times \ldots\times  P_{r}\right),$$
where the sum runs over all choices  of $P_{i}\in \left\{{\mathcal
M}(X_i),c^{SM}(X_i)\right\},\,i=1,\dots,r,$  except
$(P_{1},\ldots,P_{r})=(c^{SM}(X_1),\ldots,c^{SM}(X_r))$ and
where $$\epsilon_{i}=\left\{\begin{array}{rcl} 1&,& if \; P_{i}=c^{SM}(X_i)\\
0&,& if \; P_{i}={\mathcal M}(X_i)\\
\end{array}\right. .$$
The result follows because  $\Delta^{!}\left(
P_{1}\times \ldots\times P_{r}\right)= P_{1}\;\cdot\ldots\cdot\;
P_{r}\in H_{*}(X).$
\end{proof}

\begin{corollary}\label{11} $${\mathcal M}(X)= c\left( \left(
TM|_{X}\right)^{\oplus r-1} \right)^{-1}\cap
\displaystyle\sum_{i=1}^{r} (-1)^{D-d_i} a_{1,i}\cdot...\cdot
a_{r-1,i} \cdot {\mathcal M}(X_{i}),$$ where
$D=\displaystyle\sum_{j=1}^{r}d_j$ and
$a_{j,i}=\left\{\begin{array}{lcl}
c^{SM}(X_{j+1}) &\;{\rm if}\;& i \leq j\\
%~&~&~\\
c^{FJ}(X_{j}) &\;{\rm if}\;& i> j\\
\end{array}\right. .$
\end{corollary}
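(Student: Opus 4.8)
The plan is to deduce this directly from Theorem~\ref{theorem-1}(iii) by means of an elementary telescoping identity. Write $F_i:=c^{FJ}(X_i)$ and $S_i:=c^{SM}(X_i)$, viewed as classes on $X_i$, and recall that all the refined intersection products $P_1\cdot\ldots\cdot P_r=\Delta^{!}(P_1\times\ldots\times P_r)$ land in $H_*(X)$, where they are associative, commutative, and compatible with cap products by (pullbacks of) Chern classes of $TM$; hence the ordinary algebra of a commutative ring applies to all the expressions below. By Theorem~\ref{theorem-1}(iii), and since $\dim X=n-D$ with $D=\sum_{j=1}^r d_j$,
$${\mathcal M}(X)=(-1)^{n-D}\,c\!\left((TM|_X)^{\oplus r-1}\right)^{-1}\cap\left(F_1\cdot\ldots\cdot F_r-S_1\cdot\ldots\cdot S_r\right).$$

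First I would record the telescoping identity
$$F_1\cdot\ldots\cdot F_r-S_1\cdot\ldots\cdot S_r=\sum_{i=1}^r F_1\cdot\ldots\cdot F_{i-1}\cdot(F_i-S_i)\cdot S_{i+1}\cdot\ldots\cdot S_r,$$
which is immediate upon setting $T_i:=F_1\cdot\ldots\cdot F_i\cdot S_{i+1}\cdot\ldots\cdot S_r$ and noting that the $i$-th summand equals $T_i-T_{i-1}$, so the sum collapses to $T_r-T_0$. Then I would substitute, using the definition of the Milnor class of the local complete intersection $X_i$ (which has $\dim X_i=n-d_i$), the relation $F_i-S_i=(-1)^{n-d_i}{\mathcal M}(X_i)$, so that the $i$-th summand becomes $(-1)^{n-d_i}\,F_1\cdot\ldots\cdot F_{i-1}\cdot{\mathcal M}(X_i)\cdot S_{i+1}\cdot\ldots\cdot S_r$. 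Here $F_1\cdot\ldots\cdot F_{i-1}\cdot S_{i+1}\cdot\ldots\cdot S_r$ is exactly the product $a_{1,i}\cdot\ldots\cdot a_{r-1,i}$ of the statement: the first $i-1$ factors are the $c^{FJ}(X_j)$ with $j<i$ (the case $i>j$), while the remaining $r-i$ factors are the $c^{SM}(X_{j+1})$ with $i\le j\le r-1$ (the case $i\le j$), i.e. $c^{SM}(X_{i+1}),\dots,c^{SM}(X_r)$. Finally I would assemble the signs: the prefactor $(-1)^{n-D}$ of Theorem~\ref{theorem-1}(iii) times $(-1)^{n-d_i}$ gives $(-1)^{2n-D-d_i}=(-1)^{D-d_i}$ since $2n-2D$ is even, which yields precisely the claimed formula.

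There is no serious obstacle; the only delicate point is the sign bookkeeping and the orientation of the telescoping — placing the $c^{FJ}$-factors to the left and the $c^{SM}$-factors to the right of the distinguished index $i$ — which is exactly what pins down the particular shape of $a_{j,i}$.

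As an alternative route one can extract the same identity directly from Theorem~\ref{main-lemma}: group its $2^r-1$ summands according to the largest index $i$ with $P_i={\mathcal M}(X_i)$ (so $P_{i+1}=\dots=P_r=c^{SM}$ are forced and $P_1,\dots,P_{i-1}$ are free). Summing over the free choices replaces each factor of index $j<i$ by $(-1)^{n-d_j}c^{FJ}(X_j)$, using ${\mathcal M}(X_j)+(-1)^{n-d_j}c^{SM}(X_j)=(-1)^{n-d_j}c^{FJ}(X_j)$; collecting the resulting sign $(-1)^{\sum_{j\neq i}(n-d_j)}$ against the overall $(-1)^{nr-n}$ of Theorem~\ref{main-lemma} again produces $(-1)^{D-d_i}$ and the stated expression.
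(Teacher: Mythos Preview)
Your proof is correct. The paper does not spell out a proof of this corollary; it simply places it immediately after Theorem~\ref{main-lemma}, so the intended derivation is presumably the one you sketch as your alternative route (grouping the $2^r-1$ terms by the largest index with $P_i={\mathcal M}(X_i)$ and collapsing the free factors to $c^{FJ}$). Your primary route, however, is cleaner: the telescoping identity $F_1\cdots F_r-S_1\cdots S_r=\sum_i F_1\cdots F_{i-1}(F_i-S_i)S_{i+1}\cdots S_r$ together with $F_i-S_i=(-1)^{n-d_i}{\mathcal M}(X_i)$ gives the result straight from Theorem~\ref{theorem-1}(iii) without ever passing through the $2^r-1$-term expansion of Theorem~\ref{main-lemma}. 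Both the identification $a_{1,i}\cdots a_{r-1,i}=F_1\cdots F_{i-1}\cdot S_{i+1}\cdots S_r$ and the sign bookkeeping $(-1)^{n-D}(-1)^{n-d_i}=(-1)^{D-d_i}$ are verified correctly.
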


 From now on we  replace the bundles $E_i$ by  line
bundles $L_i$.

\subsection{Aluffi type formula}  Let  $X=X_{1} \cap \ldots \cap X_{r}$ be as above.
The $\mu$-classes were
introduced  by P. Aluffi in \cite{Aluffi}.
For each $X_i$,  the Aluffi's
 $\mu$-class  of the
singular locus  is defined  by the formula
 $$\mu_{L_i}({\rm Sing}(X_i))=c(T^*M\otimes {L_i})\cap s({\rm Sing}(X_i),M),$$ where
$s({\rm Sing}(X_i),M)$ is the Segre class of ${\rm Sing}(X_i)$ in $M$ (see
 \cite[Chapter 4]{Ful}).
Given  a cycle $\alpha \in H_{2*}(X_i,\Z)$  and $\alpha=\sum_{j\geq
0}{\alpha}^j,$ where ${\alpha}^j$ is the  codimension $j$
component of $\alpha,$ then Aluffi introduced the following cycles
$$\alpha^{\vee}:=\sum_{j\geq 0}(-1)^j{\alpha}^j \quad  \hbox{and} \quad
 \alpha \otimes L_i:=\sum_{j\geq 0}\frac{{\alpha}^j}{c(L_i)^j} \;.$$
Then Aluffi proved in \cite{Aluffi} that the total Milnor class
${\mathcal M}(X_i)$ can be described as follows:

\begin{equation}\label{Aluffi-formula}{\mathcal M}(X_i)=(-1)^{n-1}c(L_i)^{n-1}\cap (\mu_{L_i}({\rm Sing}(X_i))^{\vee
}\otimes L_i).\end{equation}

 Again using Corollary \ref{11},  the above
equation yields:
\begin{corollary} \label{Aluffi} The Total Milnor class of  $X := X_1 \cap \ldots \cap X_r$ is:
$${\mathcal M}(X)=(-1)^{n-1}c\left( \left(
TM|_{X}\right)^{\oplus r-1} \right)^{-1}\cap    \left(\displaystyle\sum_{i=1}^{r} (-1)^{r-1} a_{1,i}\cdot\ldots\cdot a_{r-1,i}\cdot c(L_{i})^{n-1}\cap (\mu_{L_{i}}({\rm Sing}{(X_{i})\;})^{\vee
}\otimes L_{i})\right),$$
where  $a_{j,i}=\left\{\begin{array}{lcl}
c^{SM}(X_{j+1}) &\;{\rm if}\;& i \leq j\\
%~&~&~\\
c^{FJ}(X_{j}) &\;{\rm if}\;& i> j\\
\end{array}\right. .$
\end{corollary}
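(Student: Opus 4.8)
The plan is to obtain Corollary~\ref{Aluffi} directly from Corollary~\ref{11} by specializing to the case in which each $E_i$ is a line bundle $L_i$, and then inserting Aluffi's formula~(\ref{Aluffi-formula}) for each $\mathcal M(X_i)$.

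First I would record that the hypothesis $E_i = L_i$ forces $d_i = 1$ for every $i$, hence $D := \sum_{j=1}^r d_j = r$ and the exponent $D - d_i$ appearing in Corollary~\ref{11} equals $r-1$ for all $i$. So Corollary~\ref{11} becomes
$$\mathcal M(X) = c\bigl((TM|_X)^{\oplus r-1}\bigr)^{-1} \cap \sum_{i=1}^r (-1)^{r-1}\, a_{1,i}\cdot\ldots\cdot a_{r-1,i}\cdot \mathcal M(X_i),$$
with $a_{j,i}$ exactly as in the statement of Corollary~\ref{Aluffi}.

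Next I would substitute, for each $i$, Aluffi's description $\mathcal M(X_i) = (-1)^{n-1}\, c(L_i)^{n-1} \cap \bigl(\mu_{L_i}(\mathrm{Sing}(X_i))^\vee \otimes L_i\bigr)$ from~(\ref{Aluffi-formula}). The scalar $(-1)^{n-1}$, being the same for every $i$, pulls out of the entire sum at once. What then remains is to commute the factor $c(L_i)^{n-1}$ — the Chern class of a bundle pulled back from the ambient manifold $M$ — past the refined intersection product concealed in the symbol $a_{1,i}\cdot\ldots\cdot a_{r-1,i}\cdot(-)$. This I would do exactly as in the proofs of Proposition~\ref{t:1} and Proposition~\ref{SM-inter}: the product is by definition $\Delta^!$ applied to an exterior product over $M^{(r)}$; by Remark~\ref{Fulton 3.2.8} the factor $c(L_i)^{n-1}$ on the $i$-th copy of $M$ becomes $c(p_i^*L_i)^{n-1}$ capping the full exterior product; and by Remark~\ref{Fu}(2) this passes through $\Delta^!$, where $(p_i\circ\Delta)^*L_i = L_i$ restricts to $L_i|_X$. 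Hence $c(L_i)^{n-1}$ may be moved outside to act on all of $a_{1,i}\cdot\ldots\cdot a_{r-1,i}\cdot\bigl(\mu_{L_i}(\mathrm{Sing}(X_i))^\vee \otimes L_i\bigr)$, and collecting the $r$ terms produces precisely the asserted identity.

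I expect the only point requiring genuine care to be this last commutation of the ambient Chern class $c(L_i)^{n-1}$ through the refined intersection product; everything else is sign bookkeeping and the substitution $d_i = 1$. If one prefers to sidestep even that, one can note that capping with $c(L_i)^{n-1}$ is capping with a class pulled back from $M$, so that the projection formula for the proper maps in the fibre square~(\ref{fiber square}) applies verbatim, exactly as in the proof of Theorem~\ref{T:2}.
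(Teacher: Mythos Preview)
Your proposal is correct and follows the same approach as the paper: specialize Corollary~\ref{11} to $d_i=1$ (so $D-d_i=r-1$) and substitute Aluffi's formula~(\ref{Aluffi-formula}) for each ${\mathcal M}(X_i)$, pulling out the common sign $(-1)^{n-1}$. The paper's proof is literally the one-line remark ``using Corollary~\ref{11}, the above equation yields''.

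One small comment: your discussion of commuting $c(L_i)^{n-1}$ past the refined intersection product is not actually needed. In the displayed formula of Corollary~\ref{Aluffi} the factor $c(L_i)^{n-1}$ sits exactly where the substitution of~(\ref{Aluffi-formula}) into Corollary~\ref{11} places it, namely capping only the last factor $\mu_{L_i}(\mathrm{Sing}(X_i))^\vee\otimes L_i$; it has not been moved outside to act on the whole product $a_{1,i}\cdot\ldots\cdot a_{r-1,i}\cdot(-)$. So the identity is a literal substitution and no commutation argument is required. That said, the commutation you describe is valid (it is precisely Lemma~\ref{3.2.8.-modificado}), so your extra step does no harm---it is simply superfluous here.
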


\subsection{Parusi\'nski-Pragacz-type formula}
\label{section-P-P-formula}
We now assume each $X_i$ has a Whitney stratification  ${\mathcal S}_{i}$. One has in \cite{Par-Pr} the following characterization of the Milnor classes
of hypersurfaces in compact manifolds:

\begin{equation}\label{P}{\mathcal M}(X_i):=\sum_{S \in {\mathcal S}_{i}}\;\gamma_{S}\left(c(L_{i|_{X_i}})^{-1} \cap
c^{SM}(\overline{S})\right)\in H_*(X_i),\end{equation}
where
$\gamma_{S}$ is the function defined on each stratum $S$
as follows: for each $x \in S \subset X_i$, let $F_x$ be a
{\it local Milnor fibre}, and  let  $\chi(F_{x})$ be its Euler
characteristic. We set:
$$\mu(x;X_i):= (-1)^{n}\;(\chi(F_{x})-1) \,,$$ and
call it the {\it local Milnor number}. This number
is constant on each Whitney stratum,  so we denote it $\mu_{S}$. Then
$\gamma_{S}$ is defined inductively by:
$$\gamma_{S}=\mu_{S} - \sum_{S' \neq S,\;\overline{S'} \supset S}
\gamma_{S'}.$$

\begin{lemma}\label{3.2.8.-modificado}
Let $Y$ and $Z$ be subschemes of $M$, $W=Y\cap Z,$  $E$ a vector bundle on $Y,$
$\alpha \in H_*(Y,\Z)$ and $\beta\in H_*(Z,\Z).$ Then $$\left(c(E)\cap \alpha\right)\cdot \beta=c(E|_W)\cap(\alpha\cdot\beta).$$
\end{lemma}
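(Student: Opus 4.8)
The plan is to reduce the statement to Fulton's Example 3.2.8 (Remark \ref{Fulton 3.2.8}) together with the compatibility of the refined Gysin homomorphism with Chern classes (Remark \ref{Fu} (2)), both applied to the diagonal embedding $\Delta\colon M\to M\times M$, which is a regular embedding.

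Recall that the refined intersection product is $\alpha\cdot\beta=\Delta^{!}(\alpha\times\beta)$, where $\alpha\times\beta\in H_{*}(Y\times Z)$ and $\Delta^{!}\colon H_{*}(Y\times Z)\to H_{*}(W)$ is the refined Gysin homomorphism of $\Delta$, the target being $W=Y\cap Z=\Delta^{-1}(Y\times Z)$. Let $p\colon Y\times Z\to Y$ be the first projection. First I would apply Remark \ref{Fulton 3.2.8} to the bundle $E$ on $Y$ and the classes $\alpha,\beta$, obtaining
$$\bigl(c(E)\cap\alpha\bigr)\times\beta=c(p^{*}E)\cap(\alpha\times\beta)\in H_{*}(Y\times Z).$$

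Next I would apply $\Delta^{!}$ to both sides. Since $p^{*}E$ is the restriction to $Y\times Z$ of a vector bundle pulled back from the ambient $M\times M$ (namely the pullback of $E$ along $M\times M\to M$), Remark \ref{Fu} (2) applies and gives
$$\Delta^{!}\bigl(c(p^{*}E)\cap(\alpha\times\beta)\bigr)=c\bigl(\Delta^{*}(p^{*}E)\bigr)\cap\Delta^{!}(\alpha\times\beta)=c\bigl(\Delta^{*}(p^{*}E)\bigr)\cap(\alpha\cdot\beta).$$
Finally comes the bookkeeping step: the composite $p\circ\Delta$, restricted to $W$, is exactly the inclusion $W\hookrightarrow Y$, so $\Delta^{*}(p^{*}E)=(p\circ\Delta)^{*}E=E|_{W}$. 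Substituting into the previous display yields $\bigl(c(E)\cap\alpha\bigr)\cdot\beta=c(E|_{W})\cap(\alpha\cdot\beta)$, as claimed.

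The only point requiring attention is this last identification of ambient spaces and of the restricted bundle, and checking that the source and target of $\Delta^{!}$ match up (via $W=\Delta^{-1}(Y\times Z)$); there is no genuine obstacle, as everything follows formally from the two quoted results once $\Delta$ is recognized as the correct regular embedding. (One could alternatively phrase the whole argument with the characteristic-function/constructible viewpoint, but the direct route above is shortest.)
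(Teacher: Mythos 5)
Your argument is correct and is essentially the paper's own proof: first the exterior-product formula (Remark \ref{Fulton 3.2.8}), then compatibility of the refined Gysin homomorphism with Chern classes (Remark \ref{Fu} (2)), then the identification $\Delta^{*}(p^{*}E)=E|_{W}$. One small caveat: your claim that $p^{*}E$ is the restriction of a bundle pulled back from the ambient $M\times M$ is both unjustified ($E$ lives only on $Y$ and need not extend to $M$) and unnecessary --- Fulton's Proposition 6.3, the source of Remark \ref{Fu} (2), applies to a bundle on the scheme $Y\times Z$ carrying the class, which is exactly how the paper invokes it.
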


\begin{proof}
Let $p$ be the projections from $Y\times Z$ to $Y$ and $d:W\rightarrow Y\times Z$ be the diagonal embedding.
 $$\begin{array}{lcl}
\left(c(E)\cap \alpha\right)\cdot \beta & = & d^!\left(\left(c(E)\cap \alpha\right)\times \beta \right) \\
\\
& = & d^!\left(c(p^*E)\cap \left(\alpha\times \beta \right)\right) \\
\\
& = & c(d^*p^*E)\cap d^!\left(\alpha\times \beta \right) \\
\\
& = & c(E|_W)\cap(\alpha\cdot\beta).
\end{array}$$
\noindent where the second and third equalities follows by Remark \ref{Fulton 3.2.8} and Remark \ref{Fu} (2) respectively.
\end{proof}

\begin{corollary}[Parusi\'{n}ski-Pragacz formula for local
complete intersections]\label{P-P} We have:
  $${\mathcal M}(X)=(-1)^{nr-n}c\left( \left(
 TM|_{X}\right)^{\oplus r-1} \right)^{-1} \cap \, \bigg(\sum{\alpha}_{S_{1},\dots,S_{r}}^{\epsilon_{1}, \dots, \epsilon_{r}}
 \frac{c(L_1)^{\epsilon_{1}}\cdot \ldots \cdot  c(L_r)^{\epsilon_{r}}}{c(L_1\oplus\ldots\oplus L_r)}\cap c^{SM}(\overline{S_1})\cdot \ldots \cdot  c^{SM}(\overline{S_r})\bigg)\,,$$

 \noindent where the sum runs over all possible choices  of the strata provided  $(S_{1},\dots,S_{r})\neq ((X_1)_{reg},\dots,(X_r)_{reg})$,
 $${\alpha}_{S_{1},\dots,S_{r}}^{\epsilon_{1}, \dots,
\epsilon_{r}}=(-1)^{(n-{1})(\epsilon_{1}+\ldots+\epsilon_{r})}
 \gamma_{S_1}^{1-\epsilon_{1}}\cdot \ldots \cdot
 \gamma_{S_r}^{1-\epsilon_{r}}\quad , \quad \hbox{and}  \quad
\epsilon_{i}=\left\{\begin{array}{rcl} 1&,& if \;S_{i}\subseteq (X_i)_{reg}\\
0&,& if \;\dim(S_i)<n-1\\
\end{array}\right. .$$
\end{corollary}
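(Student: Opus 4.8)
The plan is to combine the previously established intersection formula of Theorem~\ref{main-lemma} with the Parusi\'{n}ski--Pragacz characterization~\eqref{P} of the Milnor class of each hypersurface $X_i$, substituting the latter into the former and simplifying. First I would recall that, since all the bundles $E_i$ are now line bundles $L_i$, Theorem~\ref{main-lemma} reads
$${\mathcal M}(X)=(-1)^{nr-n}c\left( \left(TM|_{X}\right)^{\oplus r-1} \right)^{-1}\cap \sum\;(-1)^{(n-1)(\epsilon_{1}+\ldots+\epsilon_{r})}\, P_{1}\cdot\ldots\cdot P_{r},$$
where $P_i\in\{{\mathcal M}(X_i),c^{SM}(X_i)\}$, the term with all $P_i=c^{SM}(X_i)$ is omitted, and $\epsilon_i=1$ exactly when $P_i=c^{SM}(X_i)$ (here $d_i=1$ for all $i$). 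Then for each index $i$ with $P_i={\mathcal M}(X_i)$ I would substitute~\eqref{P}, namely ${\mathcal M}(X_i)=\sum_{S\in{\mathcal S}_i}\gamma_S\big(c(L_i|_{X_i})^{-1}\cap c^{SM}(\overline{S})\big)$, while for each $i$ with $P_i=c^{SM}(X_i)$ I would simply write $c^{SM}(X_i)=c^{SM}(\overline{(X_i)_{\mathrm{reg}}})$, i.e. the term corresponding to the open dense stratum with coefficient $1$ and no $c(L_i)^{-1}$ factor.

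The next step is bookkeeping: expanding the product $P_1\cdot\ldots\cdot P_r$ turns each summand into a sum over choices of strata $(S_1,\dots,S_r)$, one $S_i$ in each $X_i$. I would organize the resulting grand sum by the pair $(S_1,\dots,S_r)$ together with the indicator data $\epsilon_i$, where $\epsilon_i=1$ means ``$P_i=c^{SM}(X_i)$, so $S_i=(X_i)_{\mathrm{reg}}$ and no line-bundle factor'' and $\epsilon_i=0$ means ``$P_i={\mathcal M}(X_i)$, so $S_i$ is an arbitrary stratum contributing $\gamma_{S_i}$ and a factor $c(L_i)^{-1}$.'' The excluded term $(P_1,\dots,P_r)=(c^{SM}(X_1),\dots,c^{SM}(X_r))$ corresponds precisely to the excluded tuple $(S_1,\dots,S_r)=((X_1)_{\mathrm{reg}},\dots,(X_r)_{\mathrm{reg}})$ with all $\epsilon_i=1$. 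Here I must be slightly careful: in the statement of Corollary~\ref{P-P} the condition ``$\epsilon_i=1$ if $S_i\subseteq(X_i)_{\mathrm{reg}}$, $\epsilon_i=0$ if $\dim S_i<n-1$'' is how the $\epsilon_i$ are read off from the strata, so I would note that a stratum contained in the regular part is the open dense one and carries $\gamma_S$-less, $c(L_i)$-less data, matching the $c^{SM}(X_i)$ choice; this identification of indexing conventions is the one point that needs a sentence of justification.

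The main work is then to move all the scalar coefficients out and to collect the line-bundle factors. The sign $(-1)^{(n-1)(\epsilon_1+\ldots+\epsilon_r)}$ from Theorem~\ref{main-lemma} and the products $\gamma_{S_1}^{1-\epsilon_1}\cdot\ldots\cdot\gamma_{S_r}^{1-\epsilon_r}$ coming from~\eqref{P} assemble exactly into the coefficient ${\alpha}_{S_1,\dots,S_r}^{\epsilon_1,\dots,\epsilon_r}$ as defined in the statement. For the line-bundle factors, each $i$ with $\epsilon_i=0$ contributes a $c(L_i|_{X_i})^{-1}$ in front of $c^{SM}(\overline{S_i})$; I would use Lemma~\ref{3.2.8.-modificado} repeatedly to pull every such $c(L_i)^{-1}$ out through the refined intersection product $\cdot$, so that the iterated product $P_1\cdot\ldots\cdot P_r$ becomes $\big(\prod_{i:\,\epsilon_i=0}c(L_i)^{-1}\big)\cap\big(c^{SM}(\overline{S_1})\cdot\ldots\cdot c^{SM}(\overline{S_r})\big)$. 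Finally I would rewrite $\prod_{i:\,\epsilon_i=0}c(L_i)^{-1}$ as $c(L_1)^{\epsilon_1}\cdots c(L_r)^{\epsilon_r}/c(L_1\oplus\ldots\oplus L_r)$, using Whitney multiplicativity $c(L_1\oplus\ldots\oplus L_r)=c(L_1)\cdots c(L_r)$ and cancelling the factors with $\epsilon_i=1$; this is exactly the fraction appearing in the corollary. Assembling these three pieces — coefficient, line-bundle fraction, and the product of $c^{SM}(\overline{S_i})$ — inside $c((TM|_X)^{\oplus r-1})^{-1}\cap(-)$ gives the claimed formula. The only genuine obstacle is the careful matching of the two indexing schemes (the $P_i$-labelling of Theorem~\ref{main-lemma} versus the stratum-and-$\epsilon$ labelling here) and ensuring the single excluded term lines up; once that is pinned down, the rest is a direct application of Lemma~\ref{3.2.8.-modificado} and Whitney's formula.
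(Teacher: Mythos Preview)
Your argument is correct, but it proceeds differently from the paper's proof. The paper argues by induction on $r$: setting $Y=X_1\cap\ldots\cap X_{r-1}$ and writing $X=Y\cap X_r$, it applies the $r=2$ case of Theorem~\ref{main-lemma} to the pair $(Y,X_r)$, then invokes the induction hypothesis for ${\mathcal M}(Y)$, equation~\eqref{P} for ${\mathcal M}(X_r)$, and Proposition~\ref{SM-inter} for $c^{SM}(Y)$, before reassembling the three pieces. Your route is a single direct substitution of~\eqref{P} into the full $r$-fold statement of Theorem~\ref{main-lemma}, followed by bookkeeping; it is shorter, and it never needs Proposition~\ref{SM-inter}. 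The inductive proof, on the other hand, makes the sign and coefficient verification slightly more mechanical at each step.

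One point you flag but should state explicitly: the reason the two indexings match is that $\gamma_{S_i}=0$ whenever $S_i\subseteq(X_i)_{\mathrm{reg}}$ (the local Milnor number vanishes at smooth points). This kills every term in the expansion of ${\mathcal M}(X_i)$ coming from a regular stratum, so in your grand sum the only surviving occurrence of $S_i=(X_i)_{\mathrm{reg}}$ is the one arising from $P_i=c^{SM}(X_i)$, i.e.\ the $\epsilon_i=1$ case. The paper uses exactly this observation in its inductive step. With that sentence added, your matching of the $(P_1,\ldots,P_r)$-labelling against the $(S_1,\ldots,S_r,\epsilon)$-labelling is complete, and the rest---pulling out the $c(L_i)^{-1}$ factors via Lemma~\ref{3.2.8.-modificado} and rewriting $\prod_{\epsilon_i=0}c(L_i)^{-1}$ as the displayed fraction using $c(L_1\oplus\ldots\oplus L_r)=\prod_i c(L_i)$---is exactly as you describe.
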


\begin{proof}
The proof will be by induction on $r.$ For $r=1$ this is Parusi\'{n}ski-Pragacz formula given in equation (\ref{P}) for $X_1.$ Let $Y=X_1\cap\ldots\cap X_{r-1}.$ Then $\dim Y=n-(r-1)$ and $X=Y\cap X_r.$
Hence $ {\mathcal M}(X)= {\mathcal M}(Y\cap X_r).$

By Theorem \ref{main-lemma} we have that
$$ {\mathcal M}(Y\cap X_r)=(-1)^{n}c\left( \left( TM|_{X}\right)
\right)^{-1}\cap \Big({\mathcal M}(Y)\cdot {\mathcal
M}(X_r)+ (-1)^{\dim Y} c^{SM}(Y)\cdot  {\mathcal
M}(X_r)+(-1)^{n-1}{\mathcal M}(Y)\cdot
c^{SM}(X_r)\Big).$$

By induction hypotheses, equation (\ref{P}) for $X_r,$ Proposition \ref{SM-inter} and Lemma \ref{3.2.8.-modificado} we have that

$${\tiny \begin{array}{l}
{\mathcal M}(X)  = (-1)^n c\left( \left( TM|_{X}\right)^{\oplus r-1}\right)^{-1}\cap \Big[(-1)^{nr} \bigg(\displaystyle\sum_{\underline{S}\neq \underline{X}}{\alpha}_{S_{1},\dots,S_{r-1}}^{\epsilon_{1}, \dots, \epsilon_{r-1}}
 \frac{c(L_1)^{\epsilon_{1}}\cdot \ldots \cdot  c(L_{r-1})^{\epsilon_{r-1}}}{c(L_1\oplus\ldots\oplus L_{r-1})}\cap \prod_{i=1}^{r-1}c^{SM}(\overline{S_i})\bigg)\cdot\bigg(\sum_{S_r}\;\gamma_{S_r}\left(c(L_{r|_{X_r}})^{-1} \cap
c^{SM}(\overline{S_r})\right)\bigg)
\\
\\\quad\quad\quad\quad\quad\quad\quad\quad\quad + (-1)^{n-r+1}\left(\prod_{i=1}^{r-1}c^{SM}(\overline{S_i})\right)\cdot \bigg(\sum_{S_r}\;\gamma_{S_r}\left(c(L_{r|_{X_r}})^{-1} \cap
c^{SM}(\overline{S_r})\right)\bigg)
\\
\\\quad\quad\quad\quad\quad\quad\quad\quad\quad + (-1)^{n-1+nr}\bigg(\displaystyle\sum_{\underline{S}\neq \underline{X}}{\alpha}_{S_{1},\dots,S_{r-1}}^{\epsilon_{1}, \dots, \epsilon_{r-1}}
 \frac{c(L_1)^{\epsilon_{1}}\cdot \ldots \cdot  c(L_{r-1})^{\epsilon_{r-1}}}{c(L_1\oplus\ldots\oplus L_{r-1})}\cap \prod_{i=1}^{r-1}c^{SM}(\overline{S_i})\bigg)\cdot c^{SM}(X_r)\Big]
\end{array}}$$
\noindent where $\underline{S}\neq \underline{X}$ means that $(S_{1},\dots,S_{r-1})\neq ((X_1)_{reg},\dots,(X_{r-1})_{reg}).$

Notice that $\gamma_{S_r}=0$ if $S_{r}\subseteq (X_r)_{reg}$ and that
$${\alpha}_{S_{1},\dots,S_{r}}^{\epsilon_{1}, \dots,
\epsilon_{r}}=\left\{\begin{array}{rcl} (-1)^{n-1}{\alpha}_{S_{1},\dots,S_{r-1}}^{\epsilon_{1}, \dots,
\epsilon_{r-1}}&,& if \;S_{r}\subseteq (X_r)_{reg}\\
\\
{\alpha}_{S_{1},\dots,S_{r-1}}^{\epsilon_{1}, \dots,
\epsilon_{r-1}}\cdot \gamma_{S_r}&,& if \;\dim(S_r)<n-1\\
\end{array}\right. .$$

Hence

$${\tiny \begin{array}{l}
{\mathcal M}(X)  = (-1)^{nr-n} c\left( \left( TM|_{X}\right)^{\oplus r-1}\right)^{-1}\cap \Big[\displaystyle\sum_{\underline{S}\neq \underline{X}\;\mbox{and}\; S_r\neq X_r}{\alpha}_{S_{1},\dots,S_{r}}^{\epsilon_{1}, \dots, \epsilon_{r}}
 \frac{c(L_1)^{\epsilon_{1}}\cdot \ldots \cdot  c(L_{r})^{\epsilon_{r}}}{c(L_1\oplus\ldots\oplus L_{r})}\cap \prod_{i=1}^{r}c^{SM}(\overline{S_i})
\\
\\\quad\quad\quad\quad\quad\quad\quad\quad\quad +\displaystyle\sum_{\underline{S}= \underline{X}\;\mbox{and}\; S_r\neq X_r}{\alpha}_{S_{1},\dots,S_{r}}^{\epsilon_{1}, \dots, \epsilon_{r}}
 \frac{c(L_1)^{\epsilon_{1}}\cdot \ldots \cdot  c(L_{r})^{\epsilon_{r}}}{c(L_1\oplus\ldots\oplus L_{r})}\cap \prod_{i=1}^{r}c^{SM}(\overline{S_i})
\\
\\\quad\quad\quad\quad\quad\quad\quad\quad\quad + \displaystyle\sum_{\underline{S}\neq \underline{X}\;\mbox{and}\; S_r= X_r}{\alpha}_{S_{1},\dots,S_{r}}^{\epsilon_{1}, \dots, \epsilon_{r}}
 \frac{c(L_1)^{\epsilon_{1}}\cdot \ldots \cdot  c(L_{r})^{\epsilon_{r}}}{c(L_1\oplus\ldots\oplus L_{r})}\cap \prod_{i=1}^{r}c^{SM}(\overline{S_i})\Big]
\end{array}}.$$

Now the result follows straightforwardly.

\end{proof}

\begin{remark}
[{\bf Milnor classes and global L\^e classes}]
In \cite {BMS} there is  a concept of global
L\^e classes of a singular hypersurface $Z$ in a smooth complex
submanifold $M$ of $\mathbb{P}^{N}$, and a formula  relating
these with the Milnor classes of $Z$.
The L\^e classes extend the notion of the local L\^e cycles introduced
 in \cite{Massey}. Using Corollary \ref{11} one gets
   also a
description of the Milnor classes of the local complete intersections
$X=X_{1} \cap \ldots \cap X_{r}$ via the L\^e classes of each
hypersurface $X_{i}$.
 \end{remark}

\vspace{0.5cm}
\

 \end{document}